\numberwithin{equation}{section}
\numberwithin{figure}{section}
\newtheorem {theorem}{Theorem}[section]
\newtheorem {lemma}[theorem]{Lemma}
\newtheorem {corollary}[theorem]{Corollary}
\theoremstyle{definition}
\def\ba{\begin{array}}
\def\ea{\end{array}}
\def\bea{\begin{eqnarray} \label}
\def\eea{\end{eqnarray}}
\def\be{\begin{equation} \label}
\def\ee{\end{equation}}
\def\bit{\begin{itemize}}
\def\eit{\end{itemize}}
\def\ben{\begin{enumerate}}
\def\een{\end{enumerate}}
\newcommand\N{\mathbb{N}}
\newcommand\R{\mathbb{R}}
\DeclareMathOperator{\myspan}{span}
\DeclareMathOperator{\lin}{lin}
\definecolor{fxtarget}{rgb}{0.8000,0.0000,0.0000}
\let\@fnsymbol\@alph
\begin{document}

\title{\bfseries A Blaschke--Petkantschin formula for linear and affine subspaces with application to intersection probabilities
}

\author{Emil Dare\footnotemark[1]\;, Markus Kiderlen\footnotemark[2]\; and Christoph Th\"ale\footnotemark[3]}

\date{}
\renewcommand{\thefootnote}{\fnsymbol{footnote}}
\footnotetext[1]{Aarhus University, Department of Mathematics, Aarhus C, Denmark. Email: dare@math.au.dk}

\footnotetext[2]{Aarhus University, Department of Mathematics, Aarhus C, Denmark. Email: kiderlen@math.au.dk}

\footnotetext[3]{
Ruhr University Bochum, Faculty of Mathematics, Bochum, Germany. Email: christoph.thaele@rub.de}

\maketitle

\begin{abstract}
\noindent Consider a uniformly  distributed random linear subspace $L$ and a stochastically independent random affine subspace $E$ in $\R^n$, both of fixed dimension. For a natural class of distributions for $E$ we show that the intersection $L\cap E$ admits a density with respect to the invariant measure. This density depends only on the distance $d(o,E \cap L)$ of $L\cap E$ to the origin and is derived explicitly. It can be written as the product of a power of $d(o,E \cap L)$ and a part involving an incomplete beta integral. Choosing $E$ uniformly among all affine subspaces of fixed dimension hitting the unit ball, we derive an explicit density for the random variable $d(o,E \cap L)$ and study the behavior of the probability that $E \cap L$ hits the unit ball in high dimensions. Lastly, we show that our result can be extended to the setting where $E$ is tangent to the unit sphere, in which case we again derive the density for $d(o,E \cap L)$. Our probabilistic results are derived by means of a new integral-geometric transformation formula of Blaschke--Petkantschin type.
\bigskip
\\
{\bf Keywords}. {Blaschke--Petkantschin formula, integral geometry, intersection probability, sto\-chastic geometry.}\\
{\bf MSC}. 52A22, 53C65, 60D05.
\end{abstract}

\section{Introduction and motivation}

Fix dimension parameters $n\geq 2$, $q\in\{1,\ldots,n-1\}$ and $\gamma\in \{0,\ldots,q-1\}$. Let $L_1$ be a $q$-dimensional random linear subspace and $L_2$ be an $(n-q+\gamma)$-dimensional random linear subspace of $\R^n$. We assume that both subspaces are stochastically independent and that $L_1$ and $L_2$ are selected according to the uniform distribution on the Grassmannian $G(n,k)$ of all $k$-dimensional linear subspaces of $\mathbb{R}^n$, with $k=q$ and $k=n-q+\gamma$, respectively. In other words, we use the normalized rotation invariant measures $\nu_k$, $k=q$ and $k=n-q+\gamma$, on these spaces as our underlying probability measures; these and further concepts will formally be introduced in Section \ref{sec:Notation}. The intersection $L_1\cap L_2$ is almost surely a random subspace of $\mathbb{R}^n$ of dimension $\gamma$ and its distribution is known to be the uniform distribution on the space $G(n,\gamma)$, see Figure \ref{3Situations}.

\begin{figure}[t]
	\centering
	\includegraphics[width=\columnwidth]{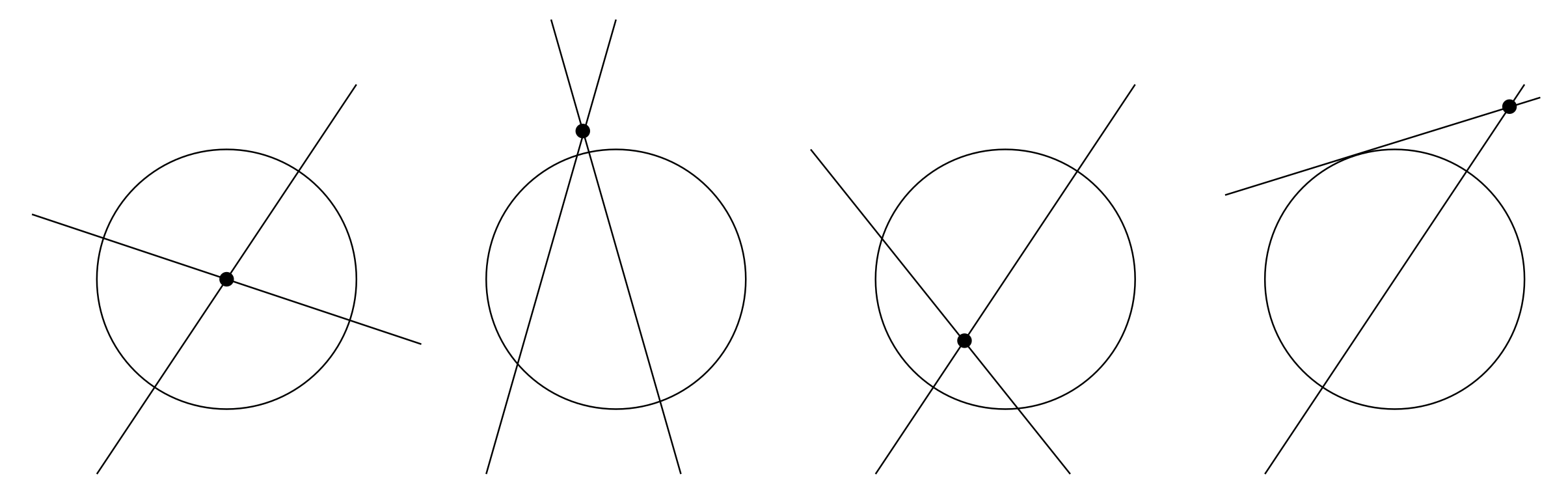}
	\caption{Illustration in the case  $n=2$, $q=1$, $\gamma=0$. From left to right: Intersection of two linear subspaces; intersection of two affine subspaces hitting the unit ball; intersection of a linear with an affine subspace hitting the unit ball; intersection of a linear subspace with an affine subspace tangent to the unit sphere.}
	\label{3Situations}
\end{figure}
\medskip 

Let us now change the set-up and let $E_1$ be a $q$-dimensional random affine subspace and $E_2$ be another $(n-q+\gamma)$-dimensional random affine subspace of $\mathbb{R}^n$. 
Since the motion invariant measure $\mu_k$ on $A(n,k)$, $k\in \{0,\ldots,n-1\}$, is not finite, we restrict attention to the set
$$
[B^n]_k=\{E\in A(n,k): E\cap B^n\ne \emptyset\}
$$
of $k$-dimensional affine subspaces hitting the unit ball $B^n$ of $\R^n$. We thus take
$E_1$ and $E_2$ as random affine subspaces distributed according to the normalized measures $\mu_q$ and $\mu_{n-q+\gamma}$ restricted to $[B^n]_q$ and $[B^n]_{n-q+\gamma}$, respectively. 
Assuming that $E_1$ and $E_2$ are stochastically independent, the intersection $E_1\cap E_2$ is almost surely a random affine subspace of $\mathbb{R}^n$ with dimension $\gamma$. However, the intersection of $E_1\cap E_2$ with the unit ball $B^n$ may  or may not be empty with strictly positive probability, see Figure \ref{3Situations}. Already this basic observation shows that -- in contrast to the case of random linear subspace discussed above -- the distribution of $E_1\cap E_2$ cannot coincide with the normalized motion-invariant measure on $[B^n]_\gamma$. More precisely, since the distance of the intersection of $E_1$ and $E_2$ to the origin $o\in\R^n$ can be arbitrarily large, the distribution of $E_1\cap E_2$ cannot even be supported on a compact subset of $A(n,\gamma)$. In fact, it is known from \cite[Thm.~7.2.8]{SchWeil2008} that the distribution of $E_1\cap E_2$ has a non-trivial density with respect to the invariant measure on $A(n,\gamma)$, which is proportional to
$$
E\mapsto \int_{A(E,q)\cap[B^n]_{q}}\int_{A(E,n-q+\gamma)\cap[B^n]_{n-q+\gamma}}[E_1,E_2]^{\gamma+1}\,\mu_{n-q+\gamma}^E(dE_2)\mu_{q}^E(dE_1),
$$
where $A(E,q)$ is the set of $q$-dimensional affine subspaces containing $E$ and $\mu_{q}^E$ is the invariant measure on that space (similarly for $A(E,n-q+\gamma)$ and $\mu_{n-q+\gamma}^E$). Moreover, $[E_1,E_2]$ stands for the so-called
subspace determinant, describing the relative position of $E_1$ and $E_2$, see below for a detailed definition. We remark that in the special case $n=2$, $q=1$ and $\gamma=0$ this is a classical result of M.\ Crofton discussed in \cite[\S 7]{BlaschkeIntegralgeometrie}, whereas the case $n=3$, $q=2$ and $\gamma=1$ goes back to W.\ Blaschke \cite[\S 33]{BlaschkeIntegralgeometrie}.

On a more abstract level, both problems just mentioned naturally lead to the study of what is known in the literature as integral-geometric transformation formulas of Blaschke--Petkantschin type. Such formulas go back to the pioneering works of W.\ Blaschke \cite{BlaschkeIntegralgeometrie} in dimensions $n=2$, and $n=3$ and have more systematically been investigated by his student B.\ Petkantschin \cite{Petkantschin}. They have further been developed in \cite{SantaloBook} using the language of differential forms and in \cite{SchWeil2008} by means of a measure-theoretic approach. Blaschke--Petkantschin formulas are fundamental devices in integral and stochastic geometry and have found various applications in convex geometry and geometric analysis \cite{ChasapisEtAl,DalMasoEtAl,DannEtAl,HaddadLudwig,LudwigFracPerim,MilmanYehudayoff,RubinDrury} as well as in stereology \cite{DareKiderlen2024,GardnerBook,GardnerJensenVolcicSurvey,JensenBook,KiderlenSurvey,Koetzer}. For the intersection of two linear subspaces a formula of this type is given by
\begin{equation}\label{eq:BPLinear}
	\begin{split}
		&\int_{G(n,q)}\int_{G(n,n-q+\gamma)}f(L_1,L_2)\,\nu_{n-q+\gamma}(dL_2)\nu_{q}(dL_1) \\
		&= c_1(n,q,\gamma)\int_{G(n,\gamma)}\int_{G(L,q)}\int_{G(L,n-q+\gamma)}f(L_1,L_2)[L_1,L_2]^{\gamma}\,\nu_{n-q+\gamma}^L(dL_2)\nu_{q}^L(dL_1)\nu_\gamma(dL),
	\end{split}
\end{equation}
where $f:G(n,q)\times G(n,n-q+\gamma)\to[0,\infty)$ is a measurable function and $G(L,q)$ is the relative Grassmannian of all $q$-dimensional linear subspaces of $\R^n$ containing $L$, whereas $\nu_{q}^L$ is the invariant probability measure on $G(L,q)$ which is invariant under all rotations of $\R^n$ that fix $L$ (similarly for $G(L,n-q+\gamma)$ and $\nu_{n-q+\gamma}^L$). We refer to \cite[Thm.~7.2.5]{SchWeil2008} where also the value of the constant $c_1(n,q,\gamma)$ can be found, which only depends on the parameters in brackets. The corresponding formula for the intersection of two affine subspaces is a special case of \cite[Thm.~7.2.8]{SchWeil2008} and reads as follows:
\begin{equation}\label{eq:BPAffine}
	\begin{split}
		&\int_{A(n,q)}\int_{A(n,n-q+\gamma)}f(E_1,E_2)\,\mu_{n-q+\gamma}(dE_2)\mu_{q}(dE_1) \\
		&= c_2(n,q,\gamma)\int_{A(n,\gamma)}\int_{A(E,q)}\int_{A(E,n-q+\gamma)}f(E_1,E_2)[E_1,E_2]^{\gamma+1}\,\mu_{n-q+\gamma}^E(dE_2)\mu_{q}^E(dE_1)\mu_\gamma(dE)    
	\end{split}    
\end{equation}
for measurable functions $f:A(n,q)\times A(n,n-q+\gamma)\to[0,\infty)$. Here, $A(E,k)$ stands for the family of $k$-dimensional affine subspaces containing $E$ and $\mu_{k}^E$ denotes the invariant measure on $A(E,k)$, $k=q$ and $k=n-q+\gamma$. The value of the constant $c_2(n,q,\gamma)$ only depends on the parameters in brackets and can be found in \cite{SchWeil2008}.

The present paper deals with a situation which in a sense is intermediate between \eqref{eq:BPLinear} and \eqref{eq:BPAffine}, and combines the linear with the affine set-up. To the best of our knowledge, this has not found attention so far in the literature. More explicitly, let $L\in G(n,q)$ be a $q$-dimensional linear subspace of $\R^n$, $q\in\{1,\ldots,n-1\}$, and for $\gamma\in\{0,\ldots,q-1\}$ let $E\in G(n,n-q+\gamma)$ be an affine subspace of dimension $n-q+\gamma$. If $E$ and $L$ are in general position, their intersection $E\cap L$ is an affine subspace of dimension $\gamma$. 

Our principal goal is the following. Find, for a given rotation invariant measure $\tilde \mu_{n-q+\gamma}$ on $A(n,n-q+\gamma)$ that is absolutely continuous with respect to $\mu_{n-q+\gamma}$, a measurable function $J: A(n,\gamma) \rightarrow [0,\infty)$ such that 
\begin{equation}\label{eq:Intro1}
	\int_{G(n,q)}   \int_{A(n,n-q+\gamma)} f(E \cap L) 
	\, \tilde\mu_{n-q+\gamma}(dE)\nu_q(dL)
	=
	\int_{A(n,\gamma)} f(E)\,J(E) \, \mu_\gamma (dE)
\end{equation}
holds for all measurable functions $f:A(n,\gamma)\to[0,\infty)$. The main result of this paper provides an explicit description of $J(E)$ and its dependence on $\tilde \mu_{n-q+\gamma}$. In the important particular case $\tilde \mu_{n-q+\gamma}=\mu_{n-q+\gamma}$, it turns out that $J(E)=c d(o,E)^{-(n-q)}$, where $c$ is a known constant and $d(o,E)$ stands for the distance of $E$ to the origin $o$. Another interesting case arises when
$\tilde \mu_{n-q+\gamma}$ is the restriction of $\mu_{n-q+\gamma}$ to $[hB^n]_{n-q+\gamma}$ for  some fixed $h>0$. Then, the left-hand side of
\eqref{eq:Intro1} is
\begin{equation}\label{eq:Intro2}
	\int_{G(n,q)}   \int_{A(n,n-q+\gamma)} f(E \cap L) \,1_{\{d(o,E)\leq h\}}
	\ \mu_{n-q+\gamma}(dE) \nu_q (dL),
\end{equation}
and $J(E)$  involves, besides of $d(o,E)^{-(n-q)}$, an additional factor that can be expressed in terms of an incomplete beta function. In probabilistic terms, our new integral-geometric transformation formula \eqref{eq:Intro1} will allow us to determine the density with respect to the invariant measure $\mu_\gamma$ on $A(n,\gamma)$ of the intersection of a random linear subspace $L$ of dimension $q$ and a stochastically independent random subspace $E$ of dimension $n-q+\gamma$ hitting the unit ball in $\R^n$, see Figure \ref{3Situations}. Moreover, we will also be able to determine the corresponding density with respect to $\mu_\gamma$ if the random affine subspace $E$ is only tangent to the unit sphere, see again Figure \ref{3Situations}.

\medskip

The remaining parts of this paper are structured as follows. In Section \ref{sec:Notation} we set up the notation and gather some background material. Some preliminary considerations are contained in Section \ref{sec:Preliminaries} and in Section \ref{sec:Results} we formulate our main theorems, especially the new integral-geometric transformation formula of Blaschke--Petkantschin-type. In Sections \ref{sec:Application1} and \ref{sec:Application2} we present the two applications to intersection probabilities mentioned above. Finally, Section \ref{sec:Proof} contains the proofs of our main results.

\section{Notation and background material}\label{sec:Notation}

Let $\mathbb{R}^n$ denote the  $n$-dimensional Euclidean space  for some fixed dimension $n\geq 1$. The \emph{Euclidean norm} will always be denoted by $\|\,\cdot\,\|$ and by $\lambda_n$ we indicate the \emph{Lebesgue measure} on $\mathbb{R}^n$. The \emph{Euclidean unit ball} and \emph{sphere} are denoted by $B^n$ and $S^{n-1}$ and their volume and surface content are given by
\begin{align}\label{eq:KappanOmegan}
 \kappa_n=\lambda_n(B^n) = \frac{\pi^{n/2}}{\Gamma(\frac{n}{2}+1)}\qquad\text{and}\qquad\omega_n=\mathcal{H}^{n-1}(S^{n-1}) = \frac{2\pi^{n/2}}{\Gamma(\frac{n}{2})},
\end{align}
respectively. Here, $\mathcal{H}^{n-1}$ is the \emph{$(n-1)$-dimensional Hausdorff measure} in $\R^n$. Further, for a set $E\subset \mathbb{R}^n$ we define the distance  $d(o,E)=\inf_{x\in E}\|x\|$ of $E$ to the origin $o\in\R^n$. We will make use of the \emph{incomplete beta function}
\[
B(x;\alpha,\beta)=\int_0^x t^{\alpha-1}(1-t)^{\beta-1}\,dt, \qquad 0\le x\le 1,
\]
with real parameters $\alpha,\beta>0$. Notice that the \emph{complete beta integral} 
satisfies 
\begin{equation}
B\big(\tfrac m2,\tfrac k2\big)=B\big(1;\tfrac m2,\tfrac k2\big)=2\frac{\omega_{m+k}}{\omega_m\omega_k},\qquad m,k\in \N. 
\label{eq:betaInt}
\end{equation}

For $n\geq 1$ and $q\in\{0,\ldots,n\}$ we let  $G(n,q)$ be the \emph{Grassmannian} of all $q$-dimensional linear subspaces of $\mathbb{R}^n$ and write  $A(n,q)$ for the family of $q$-dimensional affine subspaces of $\R^n$. We endow these spaces with the usual Borel $\sigma$-algebras and describe now shortly invariant measures on these families. Details can be found  e.g.~in \cite{SchWeil2008}, where also measurablility issues are discussed.

The group $SO_n$ of rotations in $\mathbb{R}^n$ carries a unique invariant (or \emph{Haar}) probability measure $\nu$. This group 
acts naturally on the space $G(n,q)$ and we denote by $\nu_q$ the unique $SO_n$-invariant probability measure on $G(n,q)$. Both, $SO_n$ and the group of translations act naturally on the \emph{affine Grassmannian} $A(n,q)$. There exists a motion invariant measure on  $A(n,q)$, and this measure is unique up to a multiplicative constant. We will use the motion invariant measure $\mu_q$ on the affine Grassmannian $A(n,q)$ given by 
\begin{align}\label{eq:DefMuq}
	\mu_q(\,\cdot\,) = \int_{G(n,q)}\int_{L^\perp}1_{\{L+x\in\,\cdot\,\}}\ \lambda_{L^\perp}(dx)\nu_q(dL),
\end{align}
where $\lambda_{L^\perp}$ denotes the Lebesgue measure on the orthogonal complement $L^\perp$ of $L\in G(n,q)$. Finally, for $M\in A(n,q)$ and $p\in\{0,\ldots,q\}$ we denote by $G(M,p)$ and $A(M,p)$ the \emph{relative Grassmannian} of all $p$-dimensional linear and affine subspaces contained in $M$, respectively. If, on the other hand, $p\in\{q,\ldots,n\}$ then $G(M,p)$ and $A(M,p)$ are the sets of linear and affine subspaces of dimension $p$ that contain $M$. These spaces carry natural invariant measures $\nu_p^M$ and $\mu_p^M$ as described in \cite[Sec.\ 7.1]{SchWeil2008}. In particular, these measures satisfy
\begin{align*}
	\int_{G(n,q)}\int_{G(M,p)}f(L)\ \nu_p^M(dL)\ \nu_q(dM) = \int_{G(n,p)}f(L)\ \nu_p(dL)
\end{align*}
for all measurable functions $f:G(n,p)\to[0,\infty)$, and similarly
\begin{align*}
	\int_{A(n,q)}\int_{A(F,p)}f(E)\ \mu_p^F(dE)\ \mu_q(dF) = \int_{A(n,p)}f(E)\ \mu_p(dE)
\end{align*}
for all measurable functions $f:A(n,p)\to[0,\infty)$, see \cite[Thm.~7.1.1 and Thm.~7.1.2]{SchWeil2008}.

Let $0\leq p,q\leq n-1$ and fix $L\in G(n,p)$ and $M\in G(n,q)$. If $p+q\leq n$ the \emph{subspace determinant} $[L,M]$ is defined as the $(p+q)$-volume of a parallelepiped spanned by the union of an orthonormal basis in $L$ and an orthonormal basis in $M$. If $p+q\geq n$ we define $[L,M]=[L^\perp,M^\perp]$. If $p+q=n$, both definitions coincide and $[L,M]$ is the factor by which the $p$-volume is multiplied under the orthogonal projection from $L$ onto $M^\perp$ thus,
\begin{equation}\label{eq:IntroProjectionFormula}
	\int_{L^\perp} f(x) \ \lambda_{L^\perp}(dx) 
	=
	[M,L]  \int_M f( x|L^\perp  ) \ \lambda_M (dx),
\end{equation}
for measurable $f:L^\perp\to [0,\infty)$. Here, 
$x | L^\perp$ denotes the orthogonal projection of a point $x\in \R^n$ onto $L^\perp$. For further background on subspace determinants, we refer to \cite[Sec.~14.1]{SchWeil2008}.

\section{Preliminary considerations}\label{sec:Preliminaries}

Let $n\geq 1$  and $q\in\{1,\ldots,n-1\}$.  The rotation group $SO_n$ acts on the space of real-valued functions $f$ on $A(n,q)$ by 
\[
(\eta f)(E)=f(\eta^{-1}E),\qquad E\in A(n,q), 
\]
for $\eta\in SO_n$. The function $f$ is called \emph{rotation invariant} if $\eta f= f$ for any rotation $\eta \in SO_n$. The \emph{rotational mean} of $f$, given by
\begin{equation*}
		f_{\mbox{\small rot}}(E)
		=
		\int_{SO_n} (\eta f)(E) \ \nu(d\eta),\qquad E\in A(n, q),
\end{equation*}
 is rotation invariant. 

Assume that  $f$ is rotation invariant and $n\ge 2$. Then, we have $f(E)=f(E')$ for any two affine subspaces $E,E'\in A(n,q)$ with $d(o,E)=d(o,E')$, since there is a rotation $\eta\in SO_n$ with $\eta E=E'$.  
This implies that there is a function  $f_I:[0,\infty)\to\R$, such that 
\begin{equation}\label{eq:fI}
	f(E) = f_I( d(o,E)), \qquad E \in A(n,q).
\end{equation}

The following lemma shows that for our purposes, it is essentially enough to consider the class of rotation invariant functions.

\begin{lemma}\label{Lemma:EnoughRotational}
	Let $n\geq 2$, $q\in\{1,\ldots,n-1\}$,  $\gamma\in\{0,\ldots,q-1\}$, a rotation invariant measure $\tilde \mu_{n-q+\gamma}$ on $A(n,n-q+\gamma)$ and a rotation invariant function $J:A(n, \gamma) \rightarrow [0, \infty)$ be given. Then 
	\begin{equation}\label{eq:lemma21}
		\int_{G(n,q)}   \int_{A(n,n-q+\gamma)} f(E \cap L) 
		\ \tilde \mu_{n-q+\gamma}(dE) \ \nu (dL)
		=
		\int_{A(n,\gamma)} f(E) \ J(E) \ \mu_\gamma (dE)
	\end{equation}
    holds for all measurable functions $f : A(n , \gamma) \rightarrow [0,\infty)$, if it holds for those $f$ that are in addition rotation invariant.  
\end{lemma}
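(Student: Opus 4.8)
The plan is to reduce the general identity \eqref{eq:lemma21} to the rotation invariant case by an averaging argument over $SO_n$, exploiting that all three measures involved ($\nu_q$ on $G(n,q)$, $\tilde\mu_{n-q+\gamma}$ on $A(n,n-q+\gamma)$, and $\mu_\gamma$ on $A(n,\gamma)$) are rotation invariant. First I would fix an arbitrary measurable $f:A(n,\gamma)\to[0,\infty)$ and introduce its rotational mean $f_{\mbox{\small rot}}$ as defined above. The key observation is that the left-hand side of \eqref{eq:lemma21}, viewed as a functional $\Phi(f)$ of $f$, is invariant under replacing $f$ by $\eta f$ for any $\eta\in SO_n$: indeed, substituting $E\mapsto \eta E$ in the $\tilde\mu_{n-q+\gamma}$-integral and $L\mapsto \eta L$ in the $\nu$-integral, and using that $(\eta E)\cap(\eta L)=\eta(E\cap L)$, together with the rotation invariance of both $\tilde\mu_{n-q+\gamma}$ and $\nu$, shows $\Phi(\eta f)=\Phi(f)$. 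By Fubini's theorem (the integrand is nonnegative and measurable, so this is unproblematic) one then gets
\[
\Phi(f)=\int_{SO_n}\Phi(\eta f)\,\nu(d\eta)=\Phi\!\left(\int_{SO_n}\eta f\,\nu(d\eta)\right)=\Phi(f_{\mbox{\small rot}}),
\]
where pulling the $SO_n$-average inside the integrals defining $\Phi$ is again justified by Tonelli.

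Next I would run the same averaging argument on the right-hand side. Writing $\Psi(f)=\int_{A(n,\gamma)}f(E)\,J(E)\,\mu_\gamma(dE)$, the substitution $E\mapsto\eta E$ together with the rotation invariance of $\mu_\gamma$ and the assumed rotation invariance of $J$ gives $\Psi(\eta f)=\Psi(f)$, hence by Tonelli $\Psi(f)=\Psi(f_{\mbox{\small rot}})$ as well. Now, by hypothesis the identity $\Phi(g)=\Psi(g)$ holds for every rotation invariant measurable $g$; applying this to $g=f_{\mbox{\small rot}}$, which is rotation invariant and measurable, yields $\Phi(f)=\Phi(f_{\mbox{\small rot}})=\Psi(f_{\mbox{\small rot}})=\Psi(f)$, which is exactly \eqref{eq:lemma21} for the original $f$. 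This completes the proof.

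The only genuine technical points to be careful about are the measurability of the maps $(\eta,L,E)\mapsto f((\eta^{-1}E)\cap(\eta^{-1}L))$ and $(\eta,E)\mapsto f(\eta^{-1}E)$ on the relevant product spaces — needed to apply Tonelli — and the measurability of $f_{\mbox{\small rot}}$ itself; all of these follow from the joint continuity of the group action on the Grassmannians and the standard measurability framework for these spaces (as referenced in \cite[Sec.~7.1]{SchWeil2008}), and in particular $f_{\mbox{\small rot}}$ is measurable as an integral of a jointly measurable nonnegative function. I expect this bookkeeping to be the main (though routine) obstacle; the structural heart of the argument — that both sides of \eqref{eq:lemma21} depend on $f$ only through $f_{\mbox{\small rot}}$ — is immediate from the rotation invariance of the three measures and of $J$.
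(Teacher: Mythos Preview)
Your proposal is correct and follows essentially the same approach as the paper's proof: both arguments show that each side of \eqref{eq:lemma21} depends on $f$ only through its rotational mean $f_{\mbox{\small rot}}$ (using the rotation invariance of $\nu_q$, $\tilde\mu_{n-q+\gamma}$, $\mu_\gamma$ and $J$, together with Tonelli), and then invoke the hypothesis on the rotation invariant function $f_{\mbox{\small rot}}$. Your write-up is in fact slightly more explicit about the substitution step and the measurability bookkeeping than the paper's version.
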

\begin{proof}
	Suppose that \eqref{eq:lemma21} holds true for all non-negative, measurable and rotation invariant functions. Fix an arbitrary measurable function $f:A(n,q) \rightarrow [0,\infty)$. Using the rotation invariance of the measures $\tilde\mu_{n-q+\gamma}$ and $\nu_q$ and Tonelli's theorem, the left-hand side of \eqref{eq:lemma21} is 
   \begin{align*}
		&\int_{SO_n}
	\int_{G(n,q)}   \int_{A(n,n-q+\gamma)} f(\eta^{-1}(E \cap L)) 
		\ \tilde\mu_{n-q+\gamma}(dE) \ \nu_q (dL)
      \nu(d\eta)
		\\&=
		\int_{G(n,q)}   \int_{A(n,n-q+\gamma)}  f_{\mbox{\small rot}}( E)
		\ \tilde\mu_{n-q+\gamma}(dE) \ \nu_q (dL). 
	\end{align*} 
   Using the rotation invariance of 
   $\mu_\gamma$ and $J$, a similar argument shows that the right-hand side of \eqref{eq:lemma21} is 
	\begin{align*}
		\int_{A(n,\gamma)} 
          f_{\mbox{\small rot}}(E) 
          J(E) \, \mu_\gamma (dE). 
	\end{align*}
	By assumption,  \eqref{eq:lemma21} holds for the rotation invariant function $f_{\mbox{\small rot}}$, so the last two displayed expressions coincide and the assertion is shown. 
\end{proof}

In the proof of our main result, the following lemma will turn out to be of crucial importance. It can be seen a generalization of  \cite[Lem.~4.4]{HugSchneiderSchuster}. In that result the authors prove that 
\begin{equation}\label{eq:HugEtAl}
	A(n,k,r,\alpha) =
	\int_{G(n,k)}[F,L]^\alpha\,\nu_k(dL)= 
	\prod_{i=0}^{n-r-1}\frac{\Gamma(\frac{n-i}{2})\Gamma(\frac{k-i+\alpha}{2})}
	{\Gamma(\frac{n-i+\alpha}2)\Gamma(\frac{k-i}2)}, 
\end{equation}
for $\alpha\ge 0$, $r,k\in \{1,\ldots,n\}$ with $r+k\ge n$ and $F\in G(n,r)$, where the right-hand side is interpreted as $1$ if $r=n$. The next lemma is a counterpart for the Grassmannian associated to a hyperplane. (We note that the definition of the subspace determinant in \cite{HugSchneiderSchuster} is equivalent to our definition as $r+k\ge n$.)

\begin{lemma}\label{Lemma:IntegralSubspaceDet}
	Let $n\geq 2$, $\alpha\ge 0$ and $p,q\in \{1,\ldots,n-1\}$ with $p+q\le n$ be given. Then, for any $u \in S^{n-1}$ and any fixed $M\in G(n,p)$ we have that
	\begin{equation}\label{eq:roatatFixedAxisSubspDet}
		\int_{G(\myspan u,q)} [L,M]^\alpha\, \nu_q^{\myspan{u}}(dL)= a({n,p,q,\alpha})[u,M]^\alpha
	\end{equation}
	with 
	\[
	a({n,p,q,\alpha})=\prod_{i=1}^{p}
	\frac{\Gamma(\frac{n-i}{2})\Gamma(\frac{n-q-i+\alpha+1}{2})}
	{\Gamma(\frac{n-i+\alpha}2)\Gamma(\frac{n-q-i+1}2)}.
	\]
\end{lemma}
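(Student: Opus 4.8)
The plan is to reduce the integral over the relative Grassmannian $G(\myspan u, q)$ to an integral over a full Grassmannian in one dimension lower, and then invoke the Hug--Schneider--Schuster formula \eqref{eq:HugEtAl}. Without loss of generality, by rotating, we may assume $u=e_n$, the last standard basis vector, so that $\myspan u = \R e_n$. A subspace $L\in G(\R e_n, q)$ is a $q$-dimensional linear subspace containing $e_n$; such an $L$ is uniquely determined by the $(q-1)$-dimensional subspace $L' = L\cap e_n^\perp$ of $e_n^\perp \cong \R^{n-1}$, and this correspondence $L\mapsto L'$ carries the invariant measure $\nu_q^{\R e_n}$ on $G(\R e_n,q)$ to the invariant measure $\nu_{q-1}$ on $G(n-1,q-1)$ (this is a standard identification; it is implicit in \cite[Sec.~7.1]{SchWeil2008}). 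The first key step is therefore to express the subspace determinant $[L,M]$ in terms of data in $\R^{n-1}=e_n^\perp$. Writing $u=e_n$ and decomposing, one expects a factorization of the form $[L,M]^2 = [L', M']^2 \cdot (\text{something involving } [u,M])$, where $M'$ is an appropriate projection or section of $M$; making this precise is the technical heart of the argument.

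Concretely, I would proceed as follows. Since $p+q\le n$, we have $[L,M] = $ the $(p+q)$-volume of the parallelepiped spanned by orthonormal bases of $L$ and of $M$. Because $e_n\in L$, we can pick the orthonormal basis of $L$ to be $e_n$ together with an orthonormal basis of $L'\subset e_n^\perp$. Expanding the Gram determinant and using that $e_n$ is orthogonal to $L'$, the determinant $[L,M]^2$ should reduce to the Gram determinant of (an orthonormal basis of $L'$) together with (the projections onto $e_n^\perp$ of an orthonormal basis of $M$), scaled by a factor depending only on how $M$ sits relative to $e_n$; and that scaling factor is precisely $[u,M]^{?}$ for the right exponent. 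One clean way to organize this: if $M\cap e_n^\perp$ has dimension $p$ (generic case when $u\notin M$) versus dimension $p-1$, handle the position of $M$ relative to the hyperplane $e_n^\perp$. In fact the cleanest route may be: let $M|e_n^\perp$ be the orthogonal projection of $M$ onto $e_n^\perp$, which is $p$-dimensional for $u\notin M$, and establish the identity $[L,M] = [L, M|e_n^\perp]\cdot[\myspan u, M^\perp \cap \ldots]$-type relation; but I anticipate the correct statement is $[L,M]=c(M)\,[L',\widetilde M]$ for a suitable $(p)$- or $(p-1)$-dimensional $\widetilde M$ in $e_n^\perp$, with $c(M)$ a power of $[u,M]$. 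Once this geometric identity is in hand, the integral becomes $[u,M]^\alpha$ (or a power thereof) times $\int_{G(n-1,q-1)}[L',\widetilde M]^\alpha\,\nu_{q-1}(dL')$, which is exactly of the form \eqref{eq:HugEtAl} with ambient dimension $n-1$, subspace dimension $k=q-1$, and $r=\dim\widetilde M$.

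The last step is bookkeeping: apply \eqref{eq:HugEtAl} in dimension $n-1$ with the appropriate parameters and check that the resulting product telescopes to the claimed $a(n,p,q,\alpha)=\prod_{i=1}^{p}\frac{\Gamma(\frac{n-i}{2})\Gamma(\frac{n-q-i+\alpha+1}{2})}{\Gamma(\frac{n-i+\alpha}{2})\Gamma(\frac{n-q-i+1}{2})}$. Reindexing the product from \eqref{eq:HugEtAl} (whose index runs $i=0,\dots,(n-1)-r-1$) by shifting $i\mapsto i-1$ should produce exactly $p$ factors with the Gamma arguments shown; the shift by $1$ in the ambient dimension accounts for the $n-i$ rather than $(n-1)-i$ appearing inside the Gammas. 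I would also verify the boundary cases $p+q=n$ and small $p$ directly to make sure the projection/section dimension count is consistent. The main obstacle is Step~2: pinning down the exact power of $[u,M]$ that is extracted and the exact identity of the lower-dimensional subspace $\widetilde M$ appearing in the reduced integral — everything else is either a standard measure-disintegration fact or a mechanical manipulation of Gamma functions. A useful consistency check is to set $\alpha=0$, where both sides must equal $1$, and to test $p=1$ against a direct computation, since then $G(\myspan u,q)$ and the geometry of a single line $M$ are tractable by hand.
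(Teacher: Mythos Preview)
Your overall strategy is the same as the paper's: factor $[L,M]$ as $[u,M]$ times a subspace determinant living in the hyperplane $u^\perp$, and then apply the Hug--Schneider--Schuster formula \eqref{eq:HugEtAl} in ambient dimension $n-1$. What you flag as ``the main obstacle'' is exactly the step the paper resolves, and it does so not via your tentative candidates $M|u^\perp$ or $M\cap u^\perp$, but by passing to orthogonal complements first. Since $p+q\le n$ one has $[L,M]=[L^\perp,M^\perp]$, and then \cite[Lem.~4.1]{Rataj1999} (applied with the unit vector $u\in L\subset (L^\perp)^\perp$) gives
\[
[L^\perp,M^\perp]=[L^\perp,\,M^\perp\cap u^\perp]\,\|u|M^\perp\|=[L^\perp,\,M^\perp\cap u^\perp]\,[u,M].
\]
This extracts exactly one power of $[u,M]$ and, under the bijection $L\mapsto L^\perp$ from $G(\myspan u,q)$ to $G(u^\perp,n-q)$, reduces the integral to $\int_{G(u^\perp,n-q)}[L,\,M^\perp\cap u^\perp]^\alpha\,\nu_{n-q}^{u^\perp}(dL)$, which is $A(n-1,n-q,n-p-1,\alpha)$ by \eqref{eq:HugEtAl} in $u^\perp$.

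The dualization is not merely cosmetic. In your direct reduction with $L'=L\cap u^\perp\in G(n-1,q-1)$, any natural partner $\widetilde M\subset u^\perp$ built from $M$ has dimension at most $p$, so the HSS hypothesis $r+k\ge n-1$ becomes $p+(q-1)\ge n-1$, which fails whenever $p+q<n$. After dualizing, the parameters are $k=n-q$ and $r=n-p-1$, and $r+k=2n-p-q-1\ge n-1$ holds for all $p+q\le n$. So the missing idea in your proposal is precisely this passage to complements; once it is made, your bookkeeping step (reindexing the product $A(n-1,n-q,n-p-1,\alpha)$ to obtain $a(n,p,q,\alpha)$) goes through as you anticipated.
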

\begin{proof} If $u\in M$, then \eqref{eq:roatatFixedAxisSubspDet} holds trivially as both sides vanish. Hence, we may assume $u\not\in M$. Suppose first that we also have  $u\not\in M^\perp$ and fix $L\in G(\myspan{u},q)$. Since $\dim M^\perp+\dim L^\perp=2n-(p+q)\ge n$, \cite[Lem.~4.1]{Rataj1999} implies that 
	\begin{align*}
		[L^\perp,M^\perp]&=[L^\perp,M^\perp\cap u^\perp]\|u|M^\perp\|
		\\&=[L^\perp,M^\perp\cap u^\perp][u,M], 
	\end{align*}
	where we recall that $u |M^\perp$ is the projection of $u$ onto $M^\perp$.
	Here, we used that fact that the subspace determinant $[L',M']$, as defined in 
	\cite{Rataj1999}, coincides with our definition whenever $\dim M'+\dim L'\ge n$, but differs otherwise, causing 
	\cite{Rataj1999} to consider subspace determinants relative to $u^\perp$, which is not necessary using our definition. We thus obtain 
	\[
	[L,M]=[L^\perp,M^\perp]=[L^\perp,M^\perp\cap u^\perp][u,M], 
	\]
	a relation that is also true for  $u\in M^\perp$, implying that the left-hand side of \eqref{eq:roatatFixedAxisSubspDet} coincides with 
	\begin{align*}
		&\int_{G(\myspan{u},q)} [L^\perp,M^\perp\cap u^\perp]^a\, \nu_q^{\myspan{u}}(dL)\, [u,M]^\alpha
		=\int_{G(u^\perp,n-q)} [L,M^\perp\cap u^\perp]^a\, \nu_{q-1}^{u^\perp}(dL)\, [u,M]^\alpha. 
	\end{align*}
	The last integral is now of the form \eqref{eq:HugEtAl}, but with $u^\perp$ instead of $\R^n$ as the ambient space. It is thus equal to $A(n-1,n-q,n-p-1,\alpha)$. This constant coincides with $a({n,p,q,\alpha})$, and the assertion is proven. 
\end{proof}

\section{Presentation of the main results}\label{sec:Results}

Having established the basic notions and concepts in Section \ref{sec:Preliminaries}, we will now state our main result: a general reduction of integrals of the form \eqref{eq:Intro1}. 
We remark that we imposed the dimensional constraints $0\le \gamma<q<n$ at the beginning of the introduction, since $\gamma=q$ or $q=n$,  would imply that the left-hand side of  \eqref{eq:Intro1} becomes trivial. These constraints imply the assumption $n\ge 2$,  which we now adopt for the rest of the paper.

Recall that \eqref{eq:Intro1} involves a rotation invariant measure 
$\tilde \mu_{n-q+\gamma}$ on $A(n,n-q+\gamma)$ that is dominated by $\mu_{n-q+\gamma}$. Hence, the Radon–Nikodym theorem guarantees the existence of a $\mu_{n-q+\gamma}$-density $\tilde H\ge 0$ for $\tilde \mu_{n-q+\gamma}$. The assumed rotation invariance implies that 
$H=\tilde H_{\mbox{\small rot}}$ is also a $\mu_{n-q+\gamma}$-density for $\tilde \mu_{n-q+\gamma}$. We will state our results using this density, as the function $J(r)=J_H(r)$ can explicitly be expressed in terms of $H$. 
For specific choices of the density $H$, the function $J_H(r)$ in the statement of this theorem can be simplified and be made more explicit, as illustrated in Corollaries  \ref{cor:Hindicator} and \ref{cor:MultipleIntersections} below. The proof of Theorem \ref{ThmGeneral} is postponed to Section \ref{sec:Proof} at the end of this paper. 

\begin{theorem}\label{ThmGeneral}
Fix $n\geq 2$, $q\in\{1,\ldots,n-1\}$, $\gamma\in\{0,\ldots,q-1\}$,  and let $H: A(n,n-q+\gamma) \rightarrow [0,\infty)$ be a measurable and rotation invariant function. Then 
\begin{align*}
    &\int_{G(n,q)}
		\int_{A(n,n-q+\gamma)}
		f(E \cap L) H(E) \ \mu_{n-q+\gamma}(dE) \ \nu_q(dL)
  \\&=D(n,q,\gamma)
		\int_{A(n, \gamma)}
		f( E) d(o,E)^{-(n-q)} J_H(d(o,E)) \ \mu_\gamma(dE)
\end{align*}
for all measurable $f: A(n,\gamma) \rightarrow [0,\infty)$. Here, 
	\begin{equation}\label{eq:defJ}
		J_H(r) =\int_0^1
		H_I(r z) z^{ q}(1-z^2)^{ \frac{n-q}{2}-1}
		\ dz,
	\end{equation}
and the constant is given by
	\begin{equation}
		D(n,q,\gamma)=  \frac{\omega_{\gamma+1}\omega_{q-\gamma}
			\omega_{n-q}
		}{\omega_{n-(q-\gamma)+1}\omega_{n-\gamma}}.
  \label{eq:Ddef}
	\end{equation}
\end{theorem}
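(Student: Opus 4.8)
\emph{Proposed proof strategy.} By Lemma~\ref{Lemma:EnoughRotational} it suffices to treat rotation invariant $f$, so by \eqref{eq:fI} we write $f(E)=f_I(d(o,E))$ and, by rotation invariance of $H$, $H(E)=H_I(d(o,E))$. Put $m:=n-q+\gamma$, so that $\dim M^\perp=q-\gamma$ for $M\in G(n,m)$. Disintegrating $\mu_m$ via \eqref{eq:DefMuq} (so $E=M+x$ with $x\in M^\perp$ and $d(o,E)=\|x\|$), the left-hand side becomes $\int_{G(n,q)}\int_{G(n,m)}\int_{M^\perp}f_I(d(o,(M+x)\cap L))\,H_I(\|x\|)\,\lambda_{M^\perp}(dx)\,\nu_m(dM)\,\nu_q(dL)$. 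The first, geometric, step is to show that for $(M,L)$ in general position (a co-null set), with $W:=M\cap L\in G(n,\gamma)$, one has $(M+x)\cap L=v+W$, where $v$ is the unique vector of $L\cap W^\perp$ whose orthogonal projection onto $M^\perp$ equals $x$; since $v\perp W$ this yields $d(o,(M+x)\cap L)=\|v\|$. I would prove this by decomposing $\R^n$ along $W$, $L\cap W^\perp$ and $M\cap W^\perp$ and chasing the definitions.

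Changing variables $x\mapsto v$ in the inner integral: the linear bijection $v\mapsto v\,|\,M^\perp$ from $L\cap W^\perp$ onto $M^\perp$ has Jacobian equal to the subspace determinant $[M,L]$ (using $[M,L]=[M^\perp,L^\perp]$, the identity $L^\perp=(L\cap W^\perp)^{\perp}\cap W^\perp$, the projection characterisation of subspace determinants recalled in Section~\ref{sec:Notation}, cf.\ also \cite[Lem.~4.1]{Rataj1999}). Thus the left-hand side equals $\int_{G(n,q)}\int_{G(n,m)}[M,L]\int_{L\cap W^\perp}f_I(\|v\|)H_I(\|v\,|\,M^\perp\|)\,\lambda_{L\cap W^\perp}(dv)\,\nu_m(dM)\,\nu_q(dL)$. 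Applying the linear Blaschke--Petkantschin formula \eqref{eq:BPLinear} to the $(M,L)$-integral, with $L_1=M$ of dimension $m$ and $L_2=L$ of dimension $q=n-m+\gamma$ (so the parameter ``$q$'' in \eqref{eq:BPLinear} is $m$), decomposes it over $W=M\cap L\in G(n,\gamma)$ and inserts the weight $[M,L]^\gamma$, giving total weight $[M,L]^{1+\gamma}$. Fix $W$ and pass to $W^\perp$: then $A:=M^\perp$ and $B:=L\cap W^\perp$ are independent and uniform among $(q-\gamma)$-dimensional subspaces of $W^\perp$, the space $C:=A^\perp\cap W^\perp$ has dimension $n-q$, and $[M,L]=[B,C]$ with all complements taken inside $W^\perp$.

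The core of the argument is to evaluate, after writing $v=r\eta$ with $\eta\in S^{n-1}\cap B$, the integral $\int\!\int\!\int_{S^{n-1}\cap B}[B,C]^{1+\gamma}H_I(r\,\|\eta\,|\,A\|)\,\mathcal H^{q-\gamma-1}(d\eta)$ (two outer integrations over $(q-\gamma)$-subspaces of $W^\perp$). Here Lemma~\ref{Lemma:IntegralSubspaceDet} does the work. I would first reparametrise $(\eta,B)$ by letting $\eta$ be uniform on $S^{n-1}\cap W^\perp$ and $B$ uniform among the $(q-\gamma)$-subspaces of $W^\perp$ containing $\eta$ (the two invariant measures on $\{(\eta,B):\eta\in B\}$ agree up to the constant $\omega_{q-\gamma}/\omega_{n-\gamma}$). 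For fixed $\eta$ and $A$ the remaining factor no longer depends on $B$, so Lemma~\ref{Lemma:IntegralSubspaceDet} — applied in the ambient space $W^\perp$ with fixed subspace $C$, line $\myspan\eta$ and exponent $1+\gamma$ — gives $\int[B,C]^{1+\gamma}\,\nu^{\myspan\eta}(dB)=a(n-\gamma,n-q,q-\gamma,1+\gamma)\,[\myspan\eta,C]^{1+\gamma}$ with $[\myspan\eta,C]=\|\eta\,|\,A\|$ (the distance from the unit vector $\eta$ to $C$). The leftover integral $\int\int_{S^{n-1}\cap W^\perp}H_I(r\,\|\eta\,|\,A\|)\,\|\eta\,|\,A\|^{1+\gamma}\,\mathcal H^{n-\gamma-1}(d\eta)$ is then computed by the standard decomposition of $\mathcal H^{n-\gamma-1}$ on $S^{n-1}\cap W^\perp$ adapted to the orthogonal sum $W^\perp=A\oplus C$ (dimensions $q-\gamma$ and $n-q$); the substitution $z=\|\eta\,|\,A\|$ turns it into $\omega_{q-\gamma}\omega_{n-q}\int_0^1 H_I(rz)\,z^{q}(1-z^2)^{\frac{n-q}{2}-1}\,dz=\omega_{q-\gamma}\omega_{n-q}\,J_H(r)$ with $J_H$ as in \eqref{eq:defJ}.

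Reassembling the pieces (the remaining integrations over $W$ and over the so-far-unused spheres are trivial, their integrands being constant), the left-hand side becomes a fixed constant times $\int_0^\infty f_I(r)\,r^{q-\gamma-1}J_H(r)\,dr$, while expanding the right-hand side of the theorem in the corresponding polar coordinates on $A(n,\gamma)$ produces $D(n,q,\gamma)\,\omega_{n-\gamma}$ times the same integral. The theorem then reduces to the numerical identity $c_1(n,n-q+\gamma,\gamma)\cdot a(n-\gamma,n-q,q-\gamma,1+\gamma)=\omega_{\gamma+1}\,\omega_{n-\gamma}\big/(\omega_{n-q+\gamma+1}\,\omega_{q-\gamma})$, which I would verify by a routine Gamma-function computation from the explicit value of $c_1$ in \cite[Thm.~7.2.5]{SchWeil2008} together with the product formula for $a$ in Lemma~\ref{Lemma:IntegralSubspaceDet}. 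I expect the main obstacle to be the bookkeeping of the various subspace determinants in their respective ambient spaces — above all the identity $d(o,(M+x)\cap L)=\|v\|$ and the identification of the change-of-variables Jacobian with $[M,L]$ — whereas the analytic heart is the application of Lemma~\ref{Lemma:IntegralSubspaceDet} followed by the spherical integral that manufactures the exponents $z^{q}$ and $(1-z^2)^{\frac{n-q}{2}-1}$ of \eqref{eq:defJ}.
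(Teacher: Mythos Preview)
Your argument is correct and follows a route genuinely different from the paper's in one key respect. You disintegrate $\mu_{n-q+\gamma}$ directly, carry out the geometric change of variables $x\mapsto v$ from $M^\perp$ to $L\cap W^\perp$ (with Jacobian $[M,L]$), and then apply the \emph{linear} Blaschke--Petkantschin formula \eqref{eq:BPLinear} to the pair $(M,L)$. The paper instead lifts the $G(n,q)$-integral artificially to $A(n,q)$ by inserting an auxiliary factor $\kappa_{n-q}^{-1}1_{\{d(o,E_1)\le 1\}}$ and then invokes the \emph{affine} Blaschke--Petkantschin formula \cite[Thm.~7.2.8]{SchWeil2008}; the indicator is later integrated out over $M$ via two applications of \eqref{eq:IntroProjectionFormula} to return the factor $\kappa_{n-q}$. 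From that point the two proofs converge: both pass to $W^\perp$ (called $L_0^\perp$ in the paper), perform the same flag reparametrisation $(\eta,B)\leftrightarrow(B,\eta)$, apply Lemma~\ref{Lemma:IntegralSubspaceDet} with the identical parameters $a(n-\gamma,n-q,q-\gamma,\gamma+1)$, and evaluate the same spherical integral to manufacture $J_H$ (the paper quotes \cite[Lem.~1]{Auneau2010} where you sketch the computation directly). Your route is more elementary in that it avoids the affine machinery and the auxiliary-indicator trick; the price is that you must justify by hand the Jacobian identity for the projection $L\cap W^\perp\to M^\perp$ and the equality $d(o,(M+x)\cap L)=\|v\|$, both of which the paper obtains packaged inside the affine formula. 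The residual constant verification is of comparable length in either version, though the constants to be matched differ (the paper's $\bar b$ from \eqref{eq:ovB} versus your $c_1(n,n-q+\gamma,\gamma)$ from \cite[Thm.~7.2.5]{SchWeil2008}).
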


	
	

We shall now discuss two special cases in which the function $J_H(r)$ and thus the integral relation  in Theorem \ref{ThmGeneral} can be simplified. We start by considering the special case, where $H$ is the  function $H_h(E) = 1_{\{d(o,E) \leq h\}}$ for some $h>0$, 
corresponding to $\tilde \mu_{n-q+\gamma}$ in \eqref{eq:Intro1} being the restriction of $\mu_{n-q+\gamma}$ to $[hB^n]_{n-q+\gamma}$. Definition \eqref{eq:defJ} and a substitution yield 
\begin{align}
 \label{eq:JHr}
	J_{H_h}(r)
	&= \frac12
	\int_0^{(\min\{\frac{h}{r},1\})^2}
	z^{\frac{q+1}2-1}(1-z)^{ \frac{n-q}{2}-1}
	\ dz
	=
	\begin{cases}
\frac{\omega_{n+1}}{\omega_{q+1}\omega_{n-q}}  &: r \leq h,
		\\
		\frac1 2B\big((\tfrac{h}{r})^2;\frac{q+1}2,\frac{n-q}2\big)  &: r > h,
	\end{cases}
\end{align}
where \eqref{eq:betaInt} was used at the second equality sign.  This implies the following result.

\begin{corollary}\label{cor:Hindicator}
	Fix $n\geq 2$, $q\in\{1,\ldots,n-1\}$, $\gamma\in\{0,\ldots,q-1\}$, and $h>0$. Then
	\begin{align*}
		&\int_{G(n,q)}
		\int_{[hB^n]_{n-q+\gamma}}
		f(E \cap L)\, \mu_{n-q+\gamma}(dE) \, \nu_q(dL)
		\\&=D(n,q,\gamma)
		\int_{A(n,\gamma)} f(E) d(o,E)^{-(n-q)} J(d(o,E)) 
  \, \mu_\gamma(dE),
	\end{align*}
	where $J=J_{H_h}$ is given by \eqref{eq:JHr} and  where $D(n,q,\gamma)$ is the constant given in Theorem \ref{ThmGeneral}.
\end{corollary}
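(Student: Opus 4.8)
The plan is to obtain Corollary~\ref{cor:Hindicator} as an immediate specialization of Theorem~\ref{ThmGeneral}. First I would take $H=H_h$, where $H_h(E)=1_{\{d(o,E)\le h\}}$. This function is measurable and, since $d(o,\cdot)$ is rotation invariant, it is rotation invariant as well, so Theorem~\ref{ThmGeneral} applies verbatim. Observe that with this choice the inner integral on the left-hand side restricts exactly to the set $[hB^n]_{n-q+\gamma}=\{E\in A(n,n-q+\gamma):d(o,E)\le h\}$, so that
\[
\int_{A(n,n-q+\gamma)} f(E\cap L)\,H_h(E)\,\mu_{n-q+\gamma}(dE)
=\int_{[hB^n]_{n-q+\gamma}} f(E\cap L)\,\mu_{n-q+\gamma}(dE),
\]
which matches the left-hand side of the corollary after integrating over $L$ with respect to $\nu_q$.

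Next I would identify $H_I$, the profile function associated to $H_h$ through \eqref{eq:fI}: since $H_h(E)$ depends only on $d(o,E)$, we have $H_{h,I}(t)=1_{\{t\le h\}}$ for $t\ge 0$. Plugging this into the defining formula \eqref{eq:defJ} gives
\[
J_{H_h}(r)=\int_0^1 1_{\{rz\le h\}}\,z^{q}(1-z^2)^{\frac{n-q}{2}-1}\,dz
=\int_0^{\min\{h/r,1\}} z^{q}(1-z^2)^{\frac{n-q}{2}-1}\,dz.
\]
A change of variables $z^2=u$ (so $z\,dz=\tfrac12\,du$ and $z^{q}\,dz=\tfrac12 u^{(q+1)/2-1}\,du$) turns this into $\tfrac12\int_0^{(\min\{h/r,1\})^2} u^{\frac{q+1}{2}-1}(1-u)^{\frac{n-q}{2}-1}\,du$, which is precisely the expression in \eqref{eq:JHr}; for $r\le h$ the upper limit is $1$ and the integral is the complete beta integral $\tfrac12 B(\tfrac{q+1}{2},\tfrac{n-q}{2})=\tfrac{\omega_{n+1}}{\omega_{q+1}\omega_{n-q}}$ by \eqref{eq:betaInt}, while for $r>h$ it is $\tfrac12 B((h/r)^2;\tfrac{q+1}{2},\tfrac{n-q}{2})$.

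Finally I would simply substitute $J_H=J_{H_h}$ and the constant $D(n,q,\gamma)$ from \eqref{eq:Ddef} into the conclusion of Theorem~\ref{ThmGeneral}, yielding the displayed identity of the corollary. There is essentially no obstacle here: the entire argument is a substitution into an already proven theorem together with the elementary beta-integral bookkeeping recorded in \eqref{eq:JHr}. The only point that warrants a sentence of care is verifying that $H_h$ meets the hypotheses of Theorem~\ref{ThmGeneral} (measurability and rotation invariance) and that its profile is the indicator $1_{\{t\le h\}}$, so that the left-hand side genuinely collapses to an integral over $[hB^n]_{n-q+\gamma}$; everything else is the routine change of variables and an appeal to \eqref{eq:betaInt}.
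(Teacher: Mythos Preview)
Your proposal is correct and follows exactly the paper's approach: the corollary is derived by specializing Theorem~\ref{ThmGeneral} to $H=H_h$, computing $J_{H_h}$ via the substitution $u=z^2$ in \eqref{eq:defJ}, and invoking \eqref{eq:betaInt} for the case $r\le h$. The only addition in your write-up is the explicit verification that $H_h$ satisfies the hypotheses of Theorem~\ref{ThmGeneral}, which the paper leaves implicit.
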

The incomplete beta integral, and hence the function $J$ in Corollary \ref{cor:Hindicator}, can be expressed in terms of a hypergeometric function. Also, weight functions $J_H$ associated to more general densities $H$ can be expressed in terms of -- possibly several -- hypergeometric functions. For instance, if $H_I$ is an even polynomial restricted  to $[0,\infty)$, Euler's integral relation implies such a representation,  see e.g.~\cite{AbramowitzStegun} for details. 

Using the monotone convergence theorem when $h\to\infty$ in Corollary 
\ref{cor:Hindicator}, gives an explicit 
integral relation of the form  \eqref{eq:Intro1} with $\tilde 
\mu_{n-q+\gamma}=\mu_{n-q+\gamma}$.

\begin{corollary}\label{thm:Hconstant}
	Fix $n\geq 2$, $q\in\{1,\ldots,n-1\}$ and $\gamma\in\{0,\ldots,q-1\}$. Then  
	\begin{align*}
		&\int_{G(n,q)}
		\int_{A(n,n-q+\gamma)}
		f(E \cap L)
		\,\mu_{n-q+\gamma} (dE)\,\nu_q(dL)
		\\&=
  \tilde D(n,q,\gamma) \int_{A(n,\gamma)} f(E) d(o,E)^{-(n-q)}  \ \mu_\gamma(dE)
	\end{align*}
 for all measurable functions $f:A(n, \gamma) \rightarrow [0,\infty)$. Here, 
	\begin{equation*} 
 \tilde D(n,q,\gamma)=
\frac{\omega_{n+1}\omega_{\gamma+1}
     \omega_{q-\gamma}}
     {\omega_{n-(q-\gamma)+1} 
     \omega_{n-\gamma}\omega_{q+1}}. 
\end{equation*}
\end{corollary}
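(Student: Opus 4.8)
The plan is to derive Corollary~\ref{thm:Hconstant} from Corollary~\ref{cor:Hindicator} by taking the limit $h\to\infty$, exactly as the excerpt suggests just before the statement. First I would observe that for every fixed affine subspace $E\in A(n,n-q+\gamma)$ the indicator $H_h(E)=1_{\{d(o,E)\le h\}}$ increases monotonically to the constant function $1$ as $h\to\infty$; hence, for a fixed measurable $f\ge 0$, the integrand $f(E\cap L)H_h(E)$ on the left-hand side of Corollary~\ref{cor:Hindicator} increases monotonically to $f(E\cap L)$. By the monotone convergence theorem (applied in the product measure $\mu_{n-q+\gamma}\otimes\nu_q$, or iteratively using Tonelli), the left-hand side of Corollary~\ref{cor:Hindicator} converges to the left-hand side of Corollary~\ref{thm:Hconstant}.

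Next I would handle the right-hand side. From the explicit formula \eqref{eq:JHr}, for fixed $r>0$ and all $h\ge r$ one has $J_{H_h}(r)=\frac{\omega_{n+1}}{\omega_{q+1}\omega_{n-q}}$, a constant independent of $h$; for $h<r$ one has $J_{H_h}(r)=\frac12 B\big((h/r)^2;\frac{q+1}{2},\frac{n-q}{2}\big)$, which increases monotonically in $h$ (the integrand defining the incomplete beta function is nonnegative, so the integral is monotone in its upper limit) and converges as $h\to\infty$ to $\frac12 B\big(1;\frac{q+1}{2},\frac{n-q}{2}\big)=\frac12 B\big(\tfrac{q+1}{2},\tfrac{n-q}{2}\big)=\frac{\omega_{n+1}}{\omega_{q+1}\omega_{n-q}}$ by \eqref{eq:betaInt}. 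Thus $J_{H_h}(d(o,E))\uparrow \frac{\omega_{n+1}}{\omega_{q+1}\omega_{n-q}}$ pointwise and monotonically for $\mu_\gamma$-a.e.\ $E$ (those with $d(o,E)>0$, which is all but a $\mu_\gamma$-null set since $A(E',\gamma)$ with $d(o,E')=0$ has measure zero). Another application of monotone convergence gives that the right-hand side of Corollary~\ref{cor:Hindicator} converges to
\[
D(n,q,\gamma)\,\frac{\omega_{n+1}}{\omega_{q+1}\omega_{n-q}}\int_{A(n,\gamma)} f(E)\,d(o,E)^{-(n-q)}\,\mu_\gamma(dE).
\]
Equating the two limits and reading off the constant $\tilde D(n,q,\gamma)=D(n,q,\gamma)\cdot\frac{\omega_{n+1}}{\omega_{q+1}\omega_{n-q}}=\frac{\omega_{n+1}\omega_{\gamma+1}\omega_{q-\gamma}}{\omega_{n-(q-\gamma)+1}\omega_{n-\gamma}\omega_{q+1}}$, using \eqref{eq:Ddef}, completes the proof.

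I do not anticipate a serious obstacle here; the only point requiring a little care is the justification of the interchange of limit and integral, for which monotonicity (rather than a dominating function) is the clean tool, and the observation that the relevant integrands are genuinely monotone in $h$ on both sides. One should also note that the case $h=0$ or $d(o,E)=0$ is irrelevant because $\{E:d(o,E)=0\}$ is $\mu_\gamma$-null. The algebraic simplification of the constant is routine. Alternatively, one could reprove the corollary directly from Theorem~\ref{ThmGeneral} by taking $H\equiv 1$, in which case $H_I\equiv 1$ and $J_H(r)=\int_0^1 z^q(1-z^2)^{(n-q)/2-1}\,dz=\frac12 B\big(\tfrac{q+1}{2},\tfrac{n-q}{2}\big)=\frac{\omega_{n+1}}{\omega_{q+1}\omega_{n-q}}$ is a constant independent of $r$; but the limiting argument from Corollary~\ref{cor:Hindicator} is shorter and is the route flagged in the text.
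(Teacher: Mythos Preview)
Your proposal is correct and follows exactly the approach indicated in the paper: apply the monotone convergence theorem to both sides of Corollary~\ref{cor:Hindicator} as $h\to\infty$, using that $J_{H_h}(r)\uparrow \frac{\omega_{n+1}}{\omega_{q+1}\omega_{n-q}}$, and then simplify the constant via \eqref{eq:Ddef}. The alternative route you mention (taking $H\equiv 1$ directly in Theorem~\ref{ThmGeneral}) is also valid and equally short.
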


Let us also report that Theorem \ref{ThmGeneral} allows an extension to multiple intersections in the following way. The proof is postponed to Section \ref{sec:Proof}.

\begin{corollary}  \label{cor:MultipleIntersections}
Fix $\ell,m,n\geq 1$, $q_1,\ldots,q_\ell\in\{0,\ldots,n\}$, put $q=q_1+\dots+q_\ell-(\ell-1)n$. Suppose that $q\in\{1,\ldots,n-1\}$ and fix $\gamma\in\{0,\ldots,q-1\}$
 and $p_1,\ldots,p_m\in\{0,\ldots,n\}$
 such that $p_1+\dots+p_m-(m-1)n=n-q+\gamma$. Further, let $f:A(n,\gamma)\to[0,\infty)$ be a measurable function, and let $H:A(n,n-q+\gamma)\to[0,\infty)$ be
 a rotation invariant measurable function. Define
	\begin{align*}
		I_{\ell,m} &=\int_{G(n,q_1)}\cdots\int_{G(n,q_\ell)}\int_{{A(n,p_1)}}\cdots\int_{A(n,p_m)}f(E_1\cap\ldots\cap E_m\cap L_1\cap\ldots\cap L_\ell)\\
		&\hspace{3cm}\times H(E_1\cap\ldots\cap E_m)\,\mu_{p_m}(dE_m)\dots\mu_{p_1}(dE_1)\nu_{q_\ell}(dL_\ell)\dots\nu_{q_1}(dL_1).
	\end{align*}
	Then
	$$
	I_{\ell,m} = D(n,q,\gamma)
     {\omega_{n-q+\gamma+1}\omega_{n+1}^{m}\over\omega_{n+1}\omega_{p_1}\cdots\omega_{p_m}}\,	
		\int_{A(n, \gamma)}
		f( E) d(o,E)^{-(n-q)} J_H(d(o,E)) \ \mu_\gamma(dE),
	$$
	where $J_H$ is given by \eqref{eq:defJ} and the leading constant by \eqref{eq:Ddef}. 
\end{corollary}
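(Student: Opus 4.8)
The plan is to reduce the multiple intersection to a single application of Theorem~\ref{ThmGeneral} by first collapsing the $\ell$ linear subspaces into one, and separately collapsing the $m$ affine subspaces into one, using the classical linear and affine Blaschke--Petkantschin iteration formulas~\eqref{eq:BPLinear} and~\eqref{eq:BPAffine} — or more precisely, their degenerate specialisations in which the test function depends only on the final intersection. First I would handle the linear factors: iterating the identity for intersecting two uniform linear subspaces, one shows that for a measurable $g:G(n,q)\to[0,\infty)$ with $q=q_1+\dots+q_\ell-(\ell-1)n$,
\[
\int_{G(n,q_1)}\!\!\cdots\!\int_{G(n,q_\ell)} g(L_1\cap\dots\cap L_\ell)\,\nu_{q_\ell}(dL_\ell)\cdots\nu_{q_1}(dL_1)
= \int_{G(n,q)} g(L)\,\nu_q(dL),
\]
since the intersection of independent uniform linear subspaces is again uniform on $G(n,q)$ (this is exactly the $\gamma=q$–type degeneration mentioned in the introduction, and no determinant weight survives because $g$ does not see the relative position). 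In the same spirit, for the affine factors one uses the iterated version of~\eqref{eq:BPAffine}: with $p_1+\dots+p_m-(m-1)n = n-q+\gamma =: p$ and a measurable $g:A(n,p)\to[0,\infty)$,
\[
\int_{A(n,p_1)}\!\!\cdots\!\int_{A(n,p_m)} g(E_1\cap\dots\cap E_m)\,\mu_{p_m}(dE_m)\cdots\mu_{p_1}(dE_1)
= \frac{\omega_{n+1}^{\,m-1}\,\omega_{p}}{\omega_{p_1}\cdots\omega_{p_m}}\int_{A(n,p)} g(E)\,\mu_p(dE),
\]
where the constant is obtained by repeatedly applying~\eqref{eq:BPAffine} (again in the degenerate form where the test function only depends on the final intersection, so the subspace-determinant weight integrates out) and collecting the ratios of $c_2$-type constants; the bookkeeping of the $\omega$-factors is most cleanly done by induction on $m$.

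Granting these two reductions, the left-hand side $I_{\ell,m}$ becomes
\[
\frac{\omega_{n+1}^{\,m-1}\,\omega_{n-q+\gamma+1}}{\omega_{p_1}\cdots\omega_{p_m}}
\int_{G(n,q)}\int_{A(n,n-q+\gamma)} f(E\cap L)\,H(E)\,\mu_{n-q+\gamma}(dE)\,\nu_q(dL),
\]
at which point Theorem~\ref{ThmGeneral} applies verbatim (its hypotheses $q\in\{1,\dots,n-1\}$, $\gamma\in\{0,\dots,q-1\}$ and the rotation invariance of $H$ are exactly what has been assumed), producing the factor $D(n,q,\gamma)$ and the weight $d(o,E)^{-(n-q)}J_H(d(o,E))$. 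Multiplying the two constants and rewriting $\omega_{n+1}^{\,m-1}\omega_{n-q+\gamma+1} = \omega_{n-q+\gamma+1}\omega_{n+1}^{m}/\omega_{n+1}$ gives precisely the leading constant $D(n,q,\gamma)\,\omega_{n-q+\gamma+1}\omega_{n+1}^{m}/(\omega_{n+1}\omega_{p_1}\cdots\omega_{p_m})$ claimed in the statement.

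The main obstacle I anticipate is not conceptual but the careful verification of the two degenerate Blaschke--Petkantschin reductions: one must check that when the test function depends only on the ultimate intersection, the relative integrals over $G(L,\cdot)$ resp.\ $A(E,\cdot)$ of the subspace-determinant powers contribute only a constant, and that this constant telescopes correctly through the $\ell$- resp.\ $m$-fold iteration. For the affine case this is where Lemma~\ref{Lemma:IntegralSubspaceDet} and the constant~\eqref{eq:HugEtAl}, together with~\eqref{eq:betaInt}, will be needed to evaluate the surviving determinant integrals and to simplify the resulting product of Gamma factors into the stated ratio of $\omega$'s; a clean way to sidestep part of this is to invoke~\eqref{eq:BPAffine} itself iteratively rather than re-deriving it, so that only the composition of the already-known constants $c_2$ has to be tracked. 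One should also note the edge cases $\ell=1$ and $m=1$, where the corresponding reduction is the identity with trivial constant, so that the formula specialises correctly to Theorem~\ref{ThmGeneral} when $\ell=m=1$.
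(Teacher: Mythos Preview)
Your overall strategy is correct and matches the paper's: collapse the $\ell$ linear integrals to a single $\nu_q$-integral, collapse the $m$ affine integrals to a constant times a single $\mu_{n-q+\gamma}$-integral, then invoke Theorem~\ref{ThmGeneral}. The linear reduction is exactly what the paper does (via uniqueness of the invariant probability on $G(n,q)$), though the paper phrases it as uniqueness of Haar measure rather than as a degenerate case of~\eqref{eq:BPLinear}.

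Where you diverge is in the affine reduction. You propose to iterate~\eqref{eq:BPAffine} and integrate out the surviving subspace-determinant powers $[E_1,E_2]^{\gamma+1}$ at each step, invoking Lemma~\ref{Lemma:IntegralSubspaceDet} and~\eqref{eq:HugEtAl} to evaluate these and then simplify the resulting telescoping product of Gamma factors. This works, but it is considerably more laborious than the paper's route: the paper observes directly that the pushforward measure $\widehat\mu_{n-q+\gamma}$ under $(E_1,\ldots,E_m)\mapsto E_1\cap\ldots\cap E_m$ is motion invariant, hence equals $c\,\mu_{n-q+\gamma}$ for some constant $c$, and then determines $c$ by testing both sides against $E\mapsto\mathcal H^{n-q+\gamma}(E\cap B^n)$ and applying Crofton's formula iteratively. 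The Crofton route avoids all determinant integrals and gives the constant as a telescoping product of $\omega$-ratios in one line. Your approach buys nothing extra here and costs more bookkeeping; I would recommend switching to the uniqueness-plus-Crofton argument. (Incidentally, your displayed affine constant has $\omega_p$ in one place and $\omega_{p+1}=\omega_{n-q+\gamma+1}$ in the next; the latter is what the computation actually produces.)
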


Finally, we state an alternative version of Theorem \ref{ThmGeneral}, which appears to be more general, as the intersecting linear subspace now is fixed. Closer investigation shows, however, that the two statements are equivalent, if a suitable Blaschke--Petkantschin relation is applied. The proof of this equivalence will be given in Section \ref{sec:Proof}. 

\begin{theorem}\label{ThmGeneral1}
Fix $n\geq 2$,  $q\in\{1,\ldots,n-1\}$, $\gamma\in\{0,\ldots,q-1\}$  and 
$L_0\in G(n,q)$. Let $H: A(n,n-q+\gamma) \rightarrow [0,\infty)$ be a 
rotation invariant, measurable function. Then 
\begin{align}  \label{eq:new}
		\int_{A(n,n-q+\gamma)}
		f(E \cap L_0) H(E) \ \mu_{n-q+\gamma}(dE)=
  \frac
  {\omega_{\gamma+1}\omega_{n-q}}{\omega_{n-(q-\gamma)+1}}
		\int_{A(L_0, \gamma)}
		f(E) J_H(d(o,E)) \ \mu_\gamma^{L_0}(dE)
\end{align}
for all measurable $f: A(L_0,\gamma) \rightarrow [0,\infty)$. Here, $J_H$ is given in Theorem \ref{ThmGeneral}. 
\end{theorem}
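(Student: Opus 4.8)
\medskip
\noindent\emph{Proof strategy (Theorem \ref{ThmGeneral1}).}
The plan is to deduce Theorem \ref{ThmGeneral1} from Theorem \ref{ThmGeneral}; conversely, Theorem \ref{ThmGeneral} follows from Theorem \ref{ThmGeneral1} by running the same computation backwards, as indicated at the end. Write $SO_n^{L_0}=\{\eta\in SO_n:\eta L_0=L_0\}$ for the stabiliser of $L_0$, equipped with its Haar probability measure.

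First I would reduce to a one-parameter family of test functions. Repeating the proof of Lemma \ref{Lemma:EnoughRotational} verbatim, but averaging over $SO_n^{L_0}$ in place of $SO_n$, shows that both sides of \eqref{eq:new} are unchanged if $f$ is replaced by its $SO_n^{L_0}$-rotational mean: on the left this uses $\eta L_0=L_0$ together with the rotation invariance of $H$ and of $\mu_{n-q+\gamma}$, while on the right it uses the $SO_n^{L_0}$-invariance of $\mu_\gamma^{L_0}$ and of $d(o,\cdot)$. Hence it suffices to prove \eqref{eq:new} for $SO_n^{L_0}$-invariant $f$. Since every orthogonal map of $L_0$ extends to an element of $SO_n$ (composing, if necessary, with a determinant-$(-1)$ orthogonal map of $L_0^\perp$, which is available because $n-q\geq 1$), the group $SO_n^{L_0}$ acts on $A(L_0,\gamma)$ through the full orthogonal group of $L_0$; as $\gamma<q=\dim L_0$, an $SO_n^{L_0}$-invariant $f$ therefore depends on $E\in A(L_0,\gamma)$ only through $d(o,E)$. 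So I may assume $f(E)=\phi(d(o,E))$ for some measurable $\phi:[0,\infty)\to[0,\infty)$, and I set $\tilde f(E):=\phi(d(o,E))$, a rotation-invariant function on all of $A(n,\gamma)$ whose restriction to $A(L,\gamma)$ is "the same" function for every $L\in G(n,q)$ and equals $f$ for $L=L_0$.

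The second step is an averaging argument over $L_0$. For $L\in G(n,q)$ set
\begin{equation*}
\Psi(L):=\int_{A(n,n-q+\gamma)}\tilde f(E\cap L)\,H(E)\,\mu_{n-q+\gamma}(dE)-\frac{\omega_{\gamma+1}\omega_{n-q}}{\omega_{n-(q-\gamma)+1}}\int_{A(L,\gamma)}\tilde f(E)\,J_H(d(o,E))\,\mu_\gamma^{L}(dE),
\end{equation*}
so that \eqref{eq:new} for $L_0$ is precisely the claim $\Psi(L_0)=0$. I would then verify two facts. First, $\Psi$ is constant on $G(n,q)$: substituting $E\mapsto\eta E$ in both integrals and using the rotation invariance of $\tilde f$, $H$, $\mu_{n-q+\gamma}$ and of $\mu_\gamma^{(\cdot)}$ gives $\Psi(\eta L)=\Psi(L)$ for all $\eta\in SO_n$, and $SO_n$ acts transitively on $G(n,q)$. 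Second, $\int_{G(n,q)}\Psi(L)\,\nu_q(dL)=0$: integrating the first term over $L$ yields exactly the left-hand side of Theorem \ref{ThmGeneral}, which by that theorem equals $D(n,q,\gamma)\int_{A(n,\gamma)}\tilde f(E)\,d(o,E)^{-(n-q)}J_H(d(o,E))\,\mu_\gamma(dE)$; integrating the second term over $L$ is handled by the (standard) affine Blaschke--Petkantschin relation
\begin{equation*}
\int_{G(n,q)}\int_{A(L,\gamma)}g(E)\,\mu_\gamma^{L}(dE)\,\nu_q(dL)=\frac{\omega_{q-\gamma}}{\omega_{n-\gamma}}\int_{A(n,\gamma)}g(E)\,d(o,E)^{-(n-q)}\,\mu_\gamma(dE),
\end{equation*}
applied with $g(E)=\tilde f(E)J_H(d(o,E))$. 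This relation in turn follows from the subspace theorem \cite[Thm.~7.1.1]{SchWeil2008} (to pass from pairs $(L,L_\gamma)$ with $L_\gamma\subset L$ to pairs $(L_\gamma,L)$) together with the classical linear Blaschke--Petkantschin formula $\int_{G(m,k)}\int_{L}h(x)\,\lambda_L(dx)\,\nu_k(dL)=\tfrac{\omega_k}{\omega_m}\int_{\R^m}h(y)\,\|y\|^{-(m-k)}\,\lambda_m(dy)$, applied inside $L_\gamma^\perp$ with $m=n-\gamma$ and $k=q-\gamma$ and using $d(o,L_\gamma+y)=\|y\|$ for $y\in L_\gamma^\perp$. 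Since $\frac{\omega_{\gamma+1}\omega_{n-q}}{\omega_{n-(q-\gamma)+1}}\cdot\frac{\omega_{q-\gamma}}{\omega_{n-\gamma}}=D(n,q,\gamma)$ by \eqref{eq:Ddef}, the two $L$-integrals coincide, so $\int_{G(n,q)}\Psi\,\nu_q(dL)=0$. A constant function integrating to $0$ against the probability measure $\nu_q$ vanishes identically, hence $\Psi\equiv 0$, which is Theorem \ref{ThmGeneral1}.

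I expect the first step to be the crux: without first restricting to test functions depending on $d(o,E)$ alone, the function $\Psi$ fails to be $SO_n$-equivariant (distinct subspaces $L$ would be tested against genuinely different functions) and the clean "constant with zero mean" argument collapses; getting the analogue of Lemma \ref{Lemma:EnoughRotational} with the stabiliser $SO_n^{L_0}$ right, and identifying the invariant functions on $A(L_0,\gamma)$, is where the work lies. A second, more routine, point is keeping track of the normalising constants in the auxiliary affine Blaschke--Petkantschin relation. Finally, the reverse implication Theorem \ref{ThmGeneral1}$\,\Rightarrow\,$Theorem \ref{ThmGeneral} needs no reduction at all: one integrates \eqref{eq:new} over $L_0\in G(n,q)$ with $f$ the restriction of an arbitrary measurable $\tilde f:A(n,\gamma)\to[0,\infty)$ and applies the same affine Blaschke--Petkantschin relation, recovering Theorem \ref{ThmGeneral} together with the constant $D(n,q,\gamma)$; this confirms the claimed equivalence of the two theorems.
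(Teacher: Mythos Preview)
Your proposal is correct and follows essentially the same route as the paper: reduce via an $SO_n^{L_0}$-analogue of Lemma \ref{Lemma:EnoughRotational} to $f$ depending only on $d(o,E)$, extend to a rotation-invariant $\tilde f$ on $A(n,\gamma)$, and combine Theorem \ref{ThmGeneral} with the Blaschke--Petkantschin relation \eqref{eq:BP} (your displayed identity is \eqref{eq:BP} with $r=q$, $k=\gamma$, rearranged). The only difference is cosmetic: you package the argument as ``$\Psi$ is constant with zero $\nu_q$-mean, hence zero'', whereas the paper shows each side separately is independent of $L$ and therefore equals its $\nu_q$-average---a phrasing that sidesteps the (harmless, easily fixed) $\infty-\infty$ ambiguity in your $\Psi$ when both integrals happen to be infinite.
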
 

Similar versions, that is, versions where the linear space is hold fixed,  are possible also for Corollaries \ref{cor:Hindicator}-\ref{cor:MultipleIntersections}.


\section{Intersection probabilities for linear and affine subspaces hitting the unit ball}\label{sec:Application1}

In this section we return to the  problem from stochastic geometry already mentioned in the introduction. Namely, we consider a random linear subspace $L$ of dimension $q\in\{1,\ldots,n-1\}$ in $\R^n$ with distribution $\nu_q$ and a random affine subspace $E$ of dimension $n-q+\gamma$, where $\gamma\in\{0,\ldots,q-1\}$ is a fixed number. Recalling that
$$
[B^n]_{n-q+\gamma}=\{E\in A(n,n-q+\gamma):E\cap B^n\neq\emptyset\},
$$
we use the restriction of $\kappa_{q-\gamma}^{-1}\mu_{n-q+\gamma}$ to $[B^n]_{n-q+\gamma}$ as distribution for $E$ and suppose that $L$ and $E$ are stochastically independent. In fact, it follows from \eqref{eq:DefMuq} that $\kappa_{q-\gamma}^{-1}\mu_{n-q+\gamma}$ is indeed a probability measure on $[B^n]_{n-q+\gamma}$.
We are interested in the distribution of the random affine subspace $E\cap L$, which is almost surely of dimension $\gamma$. Since its distribution is invariant under rotations of $\R^n$, all relevant information is contained in the distribution of the random variable $d(o,E\cap L)$ describing the distance of $E\cap L$ to the origin.
This distribution turns out to have a density $f_{n,q,\gamma}$ with respect to Lebesgue measure on $[0,\infty)$. The next result shows that it  is heavy tailed and  asymptotically of Pareto type with shape parameter $\gamma+2$. 
\begin{theorem}\label{thm:Application}
	In the set-up just introduced, the probability density $f_{n,q,\gamma}(\delta)$ of the random variable $d(o,E\cap L)$ is given by
	\begin{align*}
		f_{n,q,\gamma}(\delta) = (q-\gamma){\omega_{\gamma+1}\omega_{n-q}\over\omega_{n-(q-\gamma)+1}}\begin{cases}
			{\omega_{n+1}\over\omega_{q+1}\omega_{n-q}}\delta^{q-\gamma-1} &: 0\le \delta\leq 1\\
			{1\over 2}\delta^{q-\gamma-1}B({1\over \delta^2};{q+1\over 2},{n-q\over 2}) &: \delta>1.
		\end{cases}
	\end{align*}
\end{theorem}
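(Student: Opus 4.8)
The plan is to apply Corollary \ref{cor:Hindicator} with $h=1$ and then convert the resulting statement about a measure on $A(n,\gamma)$ into a statement about the distribution of $d(o,E\cap L)$. First I would note that the distribution of $d(o,E\cap L)$ is, by definition, the pushforward under the map $M\mapsto d(o,M)$ of the distribution of the random affine subspace $E\cap L$, which in turn is $\kappa_{q-\gamma}^{-1}$ times the left-hand side measure appearing in Corollary \ref{cor:Hindicator} (with $H=H_1$, i.e.\ $\mu_{n-q+\gamma}$ restricted to $[B^n]_{n-q+\gamma}$). Thus, for a test function $g:[0,\infty)\to[0,\infty)$, applying Corollary \ref{cor:Hindicator} to $f(E)=g(d(o,E))$ gives
\[
\mathbb{E}\,g(d(o,E\cap L))
= \frac{D(n,q,\gamma)}{\kappa_{q-\gamma}}
\int_{A(n,\gamma)} g(d(o,E))\, d(o,E)^{-(n-q)} J_{H_1}(d(o,E))\, \mu_\gamma(dE).
\]

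Next I would evaluate the right-hand integral by disintegrating $\mu_\gamma$ according to the distance to the origin. From the definition \eqref{eq:DefMuq} of $\mu_\gamma$ and the fact that the integrand depends on $E$ only through $\delta=d(o,E)$, the standard polar-type computation on $L^\perp$ (which has dimension $n-\gamma$) shows that $\mu_\gamma$ pushed forward under $d(o,\cdot)$ has density $\omega_{n-\gamma}\,\delta^{n-\gamma-1}$ with respect to Lebesgue measure on $[0,\infty)$. Hence
\[
\mathbb{E}\,g(d(o,E\cap L))
= \frac{D(n,q,\gamma)\,\omega_{n-\gamma}}{\kappa_{q-\gamma}}
\int_0^\infty g(\delta)\, \delta^{n-q-1-(n-\gamma)+n-\gamma}\cdots
\]
— more precisely the weight becomes $\delta^{-(n-q)}\cdot\delta^{n-\gamma-1} = \delta^{q-\gamma-1}$ times $J_{H_1}(\delta)$ — so the density of $d(o,E\cap L)$ is
\[
f_{n,q,\gamma}(\delta) = \frac{D(n,q,\gamma)\,\omega_{n-\gamma}}{\kappa_{q-\gamma}}\,\delta^{q-\gamma-1}\, J_{H_1}(\delta).
\]

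Finally I would substitute the explicit formula \eqref{eq:JHr} for $J_{H_1}=J_{H_h}$ with $h=1$, the value \eqref{eq:Ddef} of $D(n,q,\gamma)$, and the identity $\kappa_{q-\gamma}=\omega_{q-\gamma}/(q-\gamma)$ together with \eqref{eq:KappanOmegan}, and simplify the product of $\omega$-constants: the factor $\omega_{q-\gamma}$ in $D$ cancels against the denominator of $\kappa_{q-\gamma}^{-1}$ up to the factor $(q-\gamma)$, and $\omega_{n-\gamma}$ cancels against the denominator of $D$, leaving exactly $(q-\gamma)\,\omega_{\gamma+1}\omega_{n-q}/\omega_{n-(q-\gamma)+1}$ as the leading constant, with the two-case bracket inherited verbatim from \eqref{eq:JHr}. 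I expect the only genuine obstacle to be bookkeeping: correctly identifying the normalizing constant $\kappa_{q-\gamma}^{-1}$, getting the power of $\delta$ right in the disintegration of $\mu_\gamma$, and tracking the cancellation of the $\omega$-factors; the asymptotic Pareto-tail remark is then immediate since $B(\delta^{-2};\tfrac{q+1}{2},\tfrac{n-q}{2})\sim \tfrac{2}{q+1}\delta^{-(q+1)}$ as $\delta\to\infty$, which makes $f_{n,q,\gamma}(\delta)$ decay like $\delta^{-(\gamma+2)}$.
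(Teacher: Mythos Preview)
Your proposal is correct and follows essentially the same route as the paper: apply Corollary~\ref{cor:Hindicator} with $h=1$, disintegrate $\mu_\gamma$ via \eqref{eq:DefMuq} and spherical coordinates in $L^\perp$ to reduce to a one-dimensional integral in $\delta$, then simplify the constants exactly as you describe. The only cosmetic difference is that the paper computes the distribution function $\mathbb{P}[d(o,E\cap L)\le\delta]$ and differentiates, whereas you read off the density directly via test functions; the bookkeeping of powers of $\delta$ and $\omega$-cancellations is identical.
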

\begin{proof}
	Fix $\delta>0$ and consider the probability of the event $d(o,E\cap L)\leq\delta$. Using Corollary \ref{cor:Hindicator} with $h=1$, we obtain
	\begin{align*}
		\mathbb{P}[d(o,E\cap L)\leq\delta] &= {1\over\kappa_{q-\gamma}}\int_{G(n,q)}\int_{A(n,n-q+\gamma)}1_{\{d(o,E)\leq 1\}}\,1_{\{d(o,E\cap L)\leq\delta\}}\,\mu_{n-q+\gamma}(dE)\nu_q(dL)\\
		&={D(n,q,\gamma)\over\kappa_{q-\gamma}}\int_{A(n,\gamma)}1_{\{d(o,E)\leq\delta\}}d(o,E)^{-(n-q)}J_{H_1}(d(o,E))\,\mu_\gamma(dE),
	\end{align*}
 with
	$$
	J_{H_1}(r) = \begin{cases}
		{\omega_{n+1}\over\omega_{q+1}\omega_{n-q}} &: r\leq 1\\
		{1\over 2}B({1\over r^2};{q+1\over 2},{n-q\over 2}) &: r>1.
	\end{cases}
	$$
	We now apply the decomposition \eqref{eq:DefMuq}, use $d(o,L+x)=\|x\|$ whenever $x\in L^\perp$,  
    and then introduce spherical coordinates in 
    $L^\perp$ to see that 
	\begin{align*}
		&\mathbb{P}[d(o,E\cap L)\leq\delta]\\
		&={D(n,q,\gamma)\over\kappa_{q-\gamma}}
  \int_{G(n,\gamma)}\int_{L^\perp}1_{\{\|x\|\leq\delta\}}\|x\|^{-(n-q)}J_{H_1}(\|x\|)\,\lambda_{L^\perp}(dx)\nu_\gamma(dL)\\
		&={D(n,q,\gamma) \omega_{n-\gamma}\over\kappa_{q-\gamma}}\int_0^\delta r^{q-\gamma-1}J_{H_1}(r)\,dr.
	\end{align*}
	Taking the derivative with respect to $\delta$ in the last expression, inserting the value of the constant $D(n,q,\gamma)$ of Theorem \ref{ThmGeneral} and then using \eqref{eq:KappanOmegan} yields the result.
\end{proof}

Although the density $f_{n,q,\gamma}(\delta)$ is defined piecewise, we remark that it is continuous at the point $\delta=1$. In fact, this follows from the continuity of the incomplete beta function in combination with \eqref{eq:betaInt}. Let us also mention at this point that since we are working with the unit ball $B^n$ as a reference set, the results of this section and also the next one continue to hold if the random linear subspace $L$ is replaced by a deterministic linear subspace of the same dimension.

Since the density of $d(o,E\cap L)$ is asymptotically of Pareto type with shape parameter $\gamma+2$, the moment properties of the random variable $d(o,E\cap L)$ depend on the intersection dimension $\gamma$. The next result delivers a precise description. 

\begin{corollary}
	Let $\alpha\in\R$. In the set-up just introduced, one has that $\mathbb{E}d(o,E\cap L)^\alpha<\infty$ if and only if $\alpha\in(\gamma-q,\gamma+1)$. In particular, if $\gamma=0$ the random variable $d(o,E\cap L)$ has infinite expectation.
\end{corollary}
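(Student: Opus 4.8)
The plan is to compute $\mathbb{E}\,d(o,E\cap L)^\alpha$ directly from the explicit density $f_{n,q,\gamma}$ obtained in Theorem~\ref{thm:Application}, splitting the integral $\int_0^\infty \delta^\alpha f_{n,q,\gamma}(\delta)\,d\delta$ at $\delta=1$ and analysing the behaviour of the integrand near $\delta=0$ and as $\delta\to\infty$ separately. Near the origin the density is a constant multiple of $\delta^{q-\gamma-1}$, so $\delta^\alpha f_{n,q,\gamma}(\delta)\sim c\,\delta^{\alpha+q-\gamma-1}$ and integrability on $(0,1]$ holds precisely when $\alpha+q-\gamma-1>-1$, i.e.\ $\alpha>\gamma-q$. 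For the tail one has $\delta^\alpha f_{n,q,\gamma}(\delta) = \tfrac12(q-\gamma)\tfrac{\omega_{\gamma+1}\omega_{n-q}}{\omega_{n-(q-\gamma)+1}}\,\delta^{\alpha+q-\gamma-1}B\big(\tfrac1{\delta^2};\tfrac{q+1}2,\tfrac{n-q}2\big)$, so the only real work is to pin down the rate at which the incomplete beta factor decays.

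First I would establish the asymptotics $B\big(x;\tfrac{q+1}2,\tfrac{n-q}2\big)\sim \tfrac{2}{q+1}\,x^{(q+1)/2}$ as $x\to 0^+$, which is immediate from $B(x;\alpha,\beta)=\int_0^x t^{\alpha-1}(1-t)^{\beta-1}\,dt$ since $(1-t)^{\beta-1}\to 1$ uniformly on $[0,x]$ as $x\to 0$; equivalently one can invoke the expansion of the incomplete beta function quoted from \cite{AbramowitzStegun}. Substituting $x=\delta^{-2}$ gives $B\big(\delta^{-2};\tfrac{q+1}2,\tfrac{n-q}2\big)\sim \tfrac{2}{q+1}\,\delta^{-(q+1)}$ as $\delta\to\infty$, hence the tail integrand is asymptotically a constant multiple of $\delta^{\alpha+q-\gamma-1}\delta^{-(q+1)}=\delta^{\alpha-\gamma-2}$. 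Integrability on $[1,\infty)$ therefore holds precisely when $\alpha-\gamma-2<-1$, that is $\alpha<\gamma+1$.

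Combining the two conditions, $\mathbb{E}\,d(o,E\cap L)^\alpha<\infty$ if and only if $\gamma-q<\alpha<\gamma+1$, which is the claim; for $\gamma=0$ this interval is $(-q,1)$, which excludes $\alpha=1$, so the expectation is infinite. To make the ``only if'' rigorous I would note that $f_{n,q,\gamma}$ is strictly positive on $(0,\infty)$ and continuous (continuity at $\delta=1$ was already remarked upon after Theorem~\ref{thm:Application}), so the asymptotic equivalences at $0$ and $\infty$ transfer directly to a two-sided comparison, and divergence of the comparison integral forces divergence of $\int_0^\infty\delta^\alpha f_{n,q,\gamma}(\delta)\,d\delta$; for the ``if'' direction the same comparison plus boundedness of the density on compact subsets of $(0,\infty)$ suffices. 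The only mildly delicate point — and the one I would be most careful with — is the tail asymptotics of the incomplete beta factor; everything else is a routine power-counting argument, and the endpoints $\alpha=\gamma-q$ and $\alpha=\gamma+1$ are genuinely excluded because at each of them the comparison integrand is exactly $\delta^{-1}$ up to a positive constant, which is not integrable at the relevant end.
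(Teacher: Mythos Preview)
Your proposal is correct and follows essentially the same route as the paper's proof: both split the moment integral according to the piecewise form of $f_{n,q,\gamma}$, use the power behaviour $\delta^{q-\gamma-1}$ near the origin to get $\alpha>\gamma-q$, and invoke the asymptotics $B(\delta^{-2};\tfrac{q+1}{2},\tfrac{n-q}{2})\sim\tfrac{2}{q+1}\delta^{-(q+1)}$ as $\delta\to\infty$ (the paper citing \cite{AbramowitzStegun}, you deriving it directly from the integral definition) to obtain $\alpha<\gamma+1$. Your write-up is in fact more careful than the paper's about justifying the ``only if'' direction via strict positivity and the endpoint cases, but the underlying argument is identical.
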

\begin{proof}
	We have to check under which conditions on $\alpha$ the product of $\delta^\alpha$ with the probability density $f_{n,q,\gamma}(\delta)$ of Theorem \ref{thm:Application} is integrable at $\delta=0$ and $\delta=\infty$. The function $\delta\mapsto\delta^{q-\gamma-1+\alpha}$ is integrable at $\delta=0$, if and only if $\alpha>\gamma-q$. Moreover,  since $B(\delta^{-2};a,b)={\delta^{-2a}\over a}+O(\delta^{-2(a+1)})$ as $\delta\to\infty$, see \cite[\S 6.6.8 and \S 15.7]{AbramowitzStegun}, the required integrability is satisfied, whenever the function $\delta^{\alpha+q-\gamma-1-(q+1)}=\delta^{\alpha-\gamma-2}$ is integrable at $\delta=\infty$. The latter holds if and only if $\alpha-\gamma-2<-1$, or equivalently, $\alpha<\gamma+1$. This completes the proof.
\end{proof}

As anticipated above, the intersection of $E$ and $L$ may  or may not hit the unit ball. By Theorem \ref{thm:Application} the probability for the first of these events is given by
\begin{align*}
	\mathbb{P}[E\cap L\cap B^n\neq \emptyset] &= (q-\gamma){\omega_{\gamma+1}\omega_{n-q}\over\omega_{n-(q-\gamma)+1}}{\omega_{n+1}\over\omega_{q+1}\omega_{n-q}}\int_0^1\delta^{q-\gamma-1}\,d\delta.
\end{align*}
Simplification leads to the following result.

\begin{corollary}
	In the set-up just introduced, it holds that
	$$
	p_{n,q,\gamma}=\mathbb{P}[E\cap L\cap B^n \neq \emptyset] = {\omega_{\gamma+1}\omega_{n+1}\over\omega_{q+1}\omega_{n-(q-\gamma)+1}}.
	$$
\end{corollary}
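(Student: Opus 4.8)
The final statement to prove is the closed form for the intersection probability $p_{n,q,\gamma}=\mathbb{P}[E\cap L\cap B^n\neq\emptyset]=\frac{\omega_{\gamma+1}\omega_{n+1}}{\omega_{q+1}\omega_{n-(q-\gamma)+1}}$, starting from the integral expression
\begin{align*}
	\mathbb{P}[E\cap L\cap B^n\neq\emptyset] = (q-\gamma)\frac{\omega_{\gamma+1}\omega_{n-q}}{\omega_{n-(q-\gamma)+1}}\cdot\frac{\omega_{n+1}}{\omega_{q+1}\omega_{n-q}}\int_0^1\delta^{q-\gamma-1}\,d\delta
\end{align*}
that precedes it (and which itself follows from integrating the density $f_{n,q,\gamma}$ of Theorem \ref{thm:Application} over $[0,1]$, using that on $0\le\delta\le1$ the density equals the first branch).

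The plan is purely computational and short. First I would evaluate the elementary integral $\int_0^1\delta^{q-\gamma-1}\,d\delta=\frac{1}{q-\gamma}$, which is legitimate since $q-\gamma\ge1>0$ by the standing assumption $\gamma<q$. Substituting this into the displayed product, the factor $(q-\gamma)$ cancels against $\frac{1}{q-\gamma}$, and the factor $\omega_{n-q}$ in the numerator cancels against the $\omega_{n-q}$ in the denominator of $\frac{\omega_{n+1}}{\omega_{q+1}\omega_{n-q}}$. What remains is exactly $\frac{\omega_{\gamma+1}\omega_{n+1}}{\omega_{n-(q-\gamma)+1}\,\omega_{q+1}}$, which is the claimed expression. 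So the "proof" is really just collecting terms and noting the two cancellations.

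The only point needing a word of care is the justification that $\mathbb{P}[E\cap L\cap B^n\neq\emptyset]=\mathbb{P}[d(o,E\cap L)\le1]$, so that one may integrate the density of $d(o,E\cap L)$ from $0$ to $1$; this holds because an affine subspace meets $B^n$ precisely when its distance to the origin is at most $1$, and because $E\cap L$ is almost surely a genuine $\gamma$-dimensional affine subspace (so the event is well defined). One should also note that the piecewise density $f_{n,q,\gamma}$ coincides with its first branch throughout the closed interval $[0,1]$, including the endpoint $\delta=1$, which is consistent with the continuity remark made after Theorem \ref{thm:Application}; this makes the value of the single point $\delta=1$ irrelevant to the integral in any case.

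There is essentially no obstacle here: the statement is a corollary obtained by specializing the already-established density formula and performing one trivial integration plus two cancellations of $\omega$-factors. If I wanted to add robustness I would double-check the cancellation symbolically using \eqref{eq:KappanOmegan}, i.e.\ writing $\omega_m=2\pi^{m/2}/\Gamma(m/2)$ and confirming that no stray $\Gamma$-factors survive — but this is not logically necessary, since the cancellation is visible directly at the level of the $\omega$-symbols.
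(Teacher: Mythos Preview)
Your proposal is correct and matches the paper's approach exactly: the paper states the displayed integral and then writes ``Simplification leads to the following result,'' which amounts precisely to evaluating $\int_0^1\delta^{q-\gamma-1}\,d\delta=1/(q-\gamma)$ and cancelling the $(q-\gamma)$ and $\omega_{n-q}$ factors as you describe. Your added remarks justifying $\mathbb{P}[E\cap L\cap B^n\neq\emptyset]=\mathbb{P}[d(o,E\cap L)\le1]$ are appropriate and make the argument slightly more explicit than the paper's one-line dismissal.
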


This result allows quantifying the asymptotic behavior of the intersection probability $p_{n,q,\gamma}$  for fixed $q$ and $\gamma$  in high dimensions, that is, as $n\to\infty$. Using \eqref{eq:KappanOmegan}, we can rewrite $p_{n,q,\gamma}$ in terms of gamma functions as
\begin{align*}
	p_{n,q,\gamma} = {\Gamma({q+1\over 2})\Gamma({n-(q-\gamma)+1\over 2})\over\Gamma({\gamma+1\over 2})\Gamma({n+1\over 2})}.
\end{align*}
Since $q$ and $\gamma$ are assumed to be fixed, we can apply Stirling's formula \cite[\S 6.1.37]{AbramowitzStegun} for the gamma function to see that 
$$
p_{n,q,\gamma} = {\Gamma({q+1\over 2})\over \Gamma({\gamma+1\over 2})}\Big({2\over n}\Big)^{q-\gamma\over 2}(1+o_n(1)) = {\omega_{\gamma+1}\over\omega_{q+1}}\Big({2\over \pi\,n}\Big)^{q-\gamma\over 2}(1+o_n(1)),
$$
as $n\to\infty$, where we write $o_n(1)$ for a sequence that converges to zero with $n$. Hence,  in high dimensions, the intersection point of the random affine subspace $E$ and the random linear subspace $L$ will asymptotically almost surely be outside the unit ball $B^n$.

\begin{figure}[h!]
	\centering
	\includegraphics[width=0.3\columnwidth]{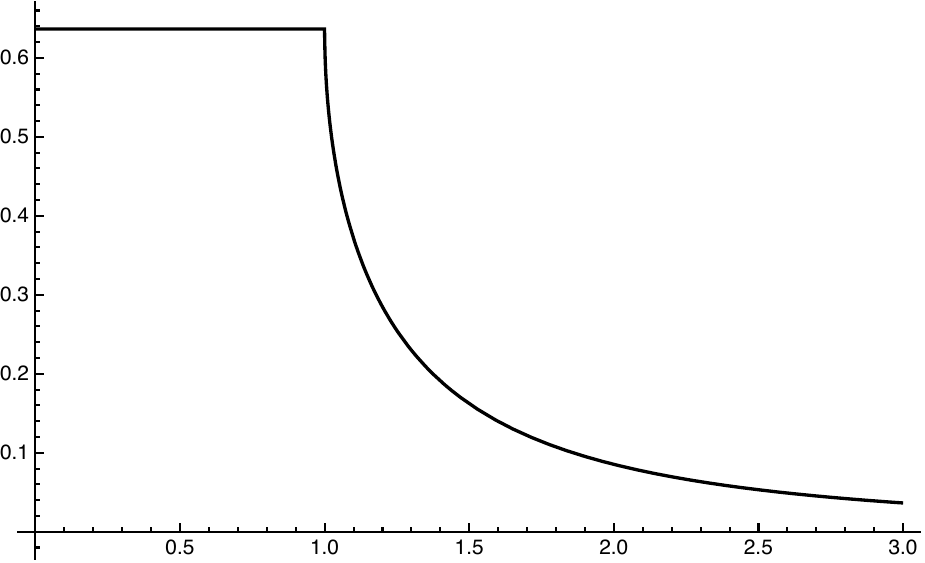}\quad
	\includegraphics[width=0.3\columnwidth]{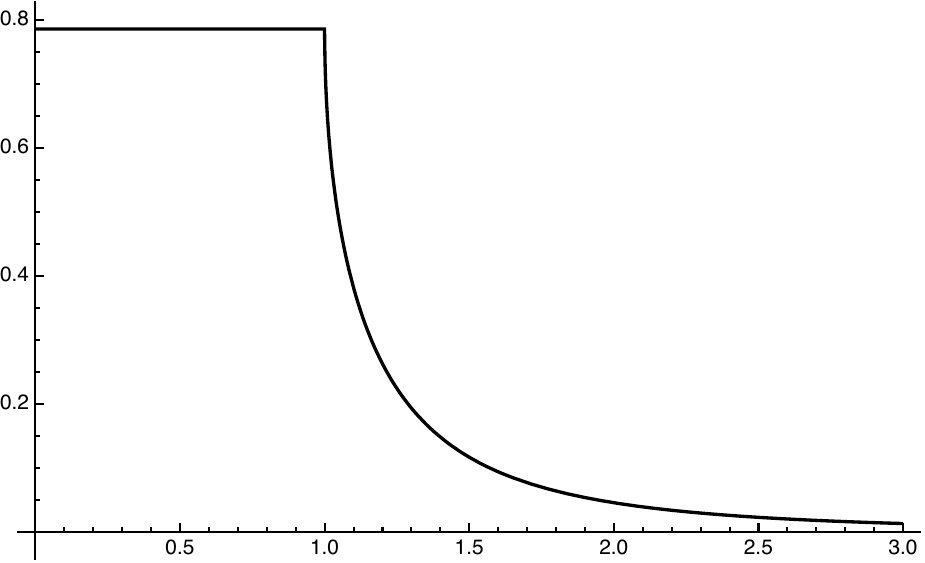}\quad
	\includegraphics[width=0.3\columnwidth]{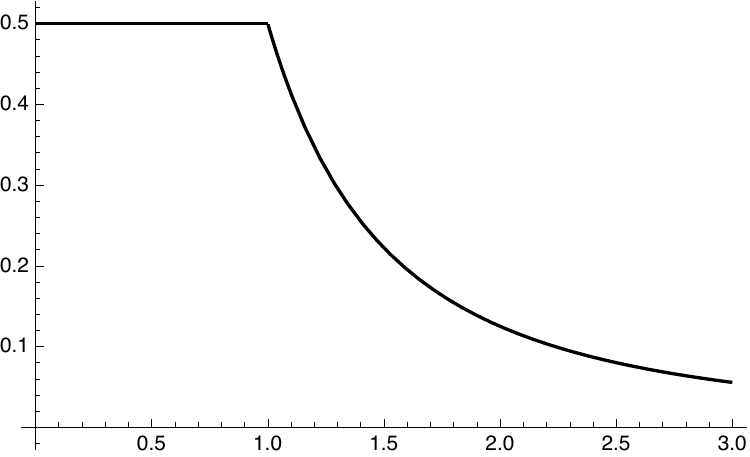}\\[0.5cm]
	\includegraphics[width=0.3\columnwidth]{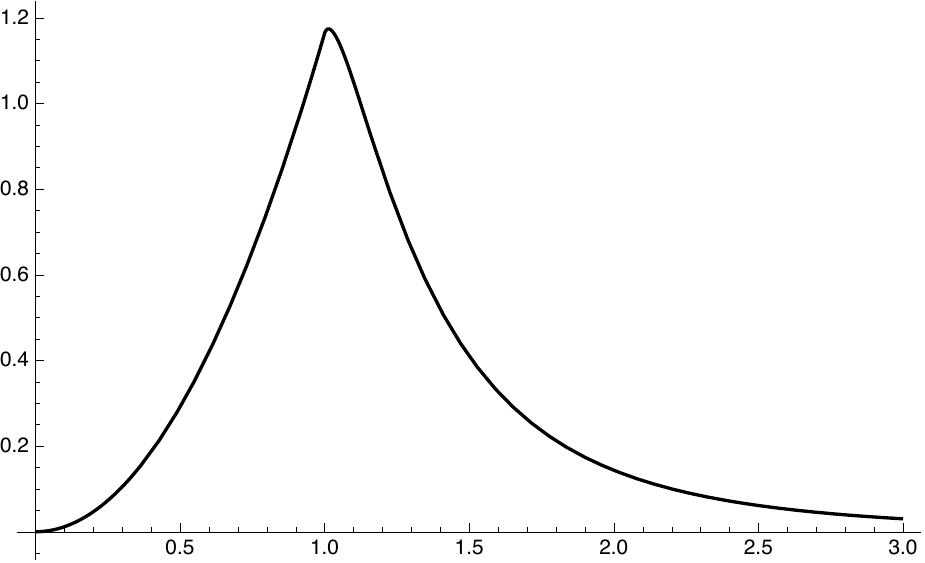}\quad
	\includegraphics[width=0.3\columnwidth]{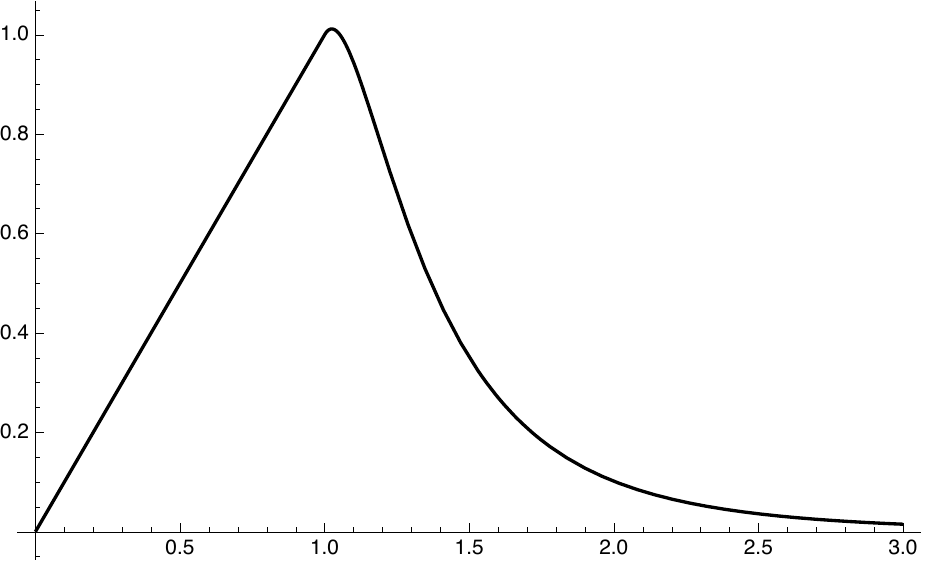}\quad
	\includegraphics[width=0.3\columnwidth]{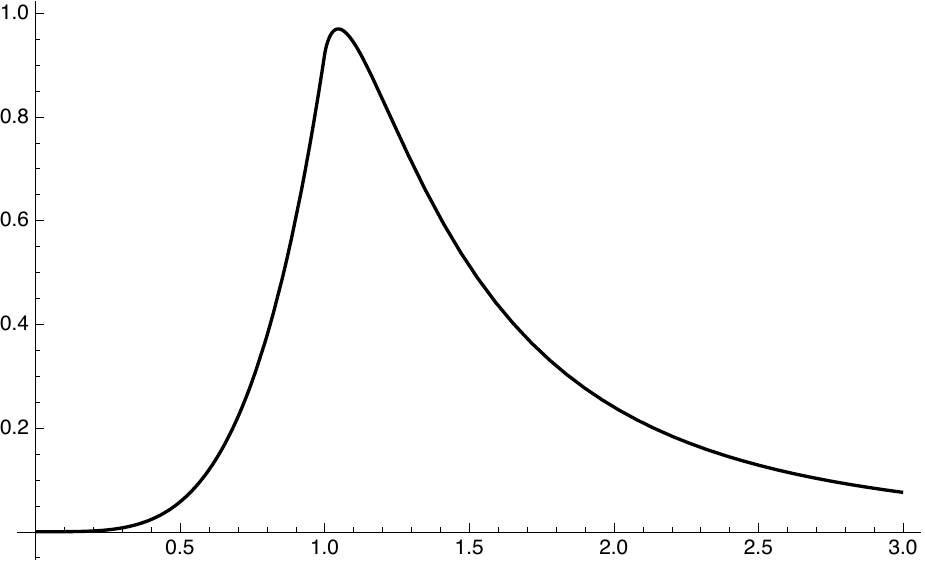}
	\caption{First line: The probability densities $f_{2,1,0}$, $f_{3,2,1}$ and $f_{3,1,0}$ from left to right. Second line: The probability densities $f_{8,5,2}$, $f_{9,5,3}$ and $f_{9,6,1}$ from left to right.}
	\label{fig:densities}
\end{figure}

In the planar case $n=2$ we can take $q=1$ and $\gamma=2$ and obtain
\begin{alignat*}{3}
	f_{2,1,0}(\delta) &= {2\over\pi}\,\begin{cases}
		1 &: 0\le\delta\leq 1\\
		1-\sqrt{1-\delta^{-2}} &: \delta>1,
	\end{cases}
	\qquad\qquad &&p_{2,1,0} &&= {2\over\pi}.
	\intertext{Moreover, if $n=3$ we can consider the case $q=2$ and $\gamma=1$, which yields}
	f_{3,2,1}(\delta) &= \begin{cases}
		{\pi\over 4} &: 0\le\delta\leq 1\\
		{1\over 2}(\operatorname{arccsc}(\delta)-\sqrt{1-\delta^{-2}}) &: \delta>1,
	\end{cases}
	\qquad\qquad &&p_{3,2,1}&&={\pi\over 4},
	\intertext{and the case $q=1$ and $\gamma=0$, where the probability density reduces to}
	f_{3,1,0}(\delta) &= {1\over 2}\begin{cases}
		1 &:0\le \delta\leq 1\\
		\delta^{-2} &: \delta> 1,
	\end{cases}
	\qquad\qquad &&p_{3,1,0}&&={1\over 2},
\end{alignat*}
see also Figure \ref{fig:densities}. In particular, if $n=3$, $q=2$ and $\gamma=1$, we obtain
$$
\mathbb{E} d(o,E\cap L)=\int_0^\infty \delta f_{3,2,1}(\delta)\,d\delta = {\pi\over 4},
$$
whereas in the other two cases $\mathbb{E} d(o,E\cap L)=\infty$. To illustrate the potential complexity of the density functions $f_{n,q,\gamma}(\delta)$ we also record the following values:

\setlength{\tabcolsep}{10pt} 
\renewcommand{\arraystretch}{1.5}
\begin{table}[H]
    \centering
    \begin{tabular}{c|l|c|c}
         $n,q,\gamma$ & $f_{n,q,\gamma}$ & $p_{n,q,\gamma}$ & $\mathbb{E}d(o,E\cap L)$\\
         \hline
         $8,5,2$ & $\begin{cases}
		{128\over 35\pi}\delta^2 &: 0\le\delta\leq 1\\
		{48\over 105\pi}{8\delta^7-(8\delta^6+4\delta^4+3\delta^2-15)\sqrt{\delta^2-1}\over\delta^5} &: \delta>1,
	\end{cases}$ & ${128\over 105\pi}$ & ${4\over\pi}$\\
        $9,5,3$ & $\begin{cases}
		\delta &: 0\le \delta\leq 1\\
		{4\delta^2-3\over\delta^7} &: \delta>1,
	\end{cases}$ & ${1\over 2}$ & ${16\over 15}$\\
        $9,6,1$ & $\begin{cases}
		{75\pi\over 265}\delta^4 &: 0\le\delta\leq 1\\
		{75\delta^8\operatorname{arccsc}(\delta)-5(16\delta^6+10\delta^4+8\delta^2-48)\sqrt{\delta^2-1}\over 128\delta^4} &: \delta>1.
	\end{cases}$ & ${15\pi\over 256}$ & ${5\pi\over 8}$
    \end{tabular}

\end{table}

We see that the complexity of the probability density $f_{n,q, \gamma}$ varies depending on $n,q,\gamma$. We notice in particular that $p_{9,5,3}=\frac{1}{2}$, which is the same as $p_{3,1,0}$ from before. In fact, for all odd $n$, there are (often multiple) ways we can choose $q$ and $\gamma$ such that $p_{n,q, \gamma}= \frac{1}{2}$. For instance, if we take $\gamma= \frac{n-1}{2}-1$ and $q = \frac{n-1}{2}+1$ or $q = n-2$, then $p_{n,q,\gamma}=\frac{1}{2}$. 

\section{Intersection probabilities for linear and affine subspaces tangent to the unit sphere\label{sec:Application2}}

In this section, we consider another application of Theorem \ref{ThmGeneral} to intersection probabilities in stochastic geometry. As in the previous section, we consider a random linear subspace $L$ of dimension $q \in \{1, \ldots ,n-1\}$ in $\mathbb{R}^n$ with distribution $\nu_q$. In contrast, $E$ is now a stochastically independent random affine subspace $E$ in $\R^n$ of dimension $n-q+\gamma$ such that $d(o,E)=1$. Thus, 
$E$ is tangent to the unit sphere $S^{n-1}$ in $\R^n$. The rotation invariant probability measure on that space is given by
\begin{equation}\label{eq:randomDensFlatSphere}
	\sigma_{n-q+\gamma}
	(A)
	=
	\frac{1}{\omega_{q-\gamma}}
	\int_{G(n,n-q+\gamma)}
	\int_{S^{n-1}\cap M^\perp} 1_A(M+u) \ \mathcal{H}^{q-\gamma-1}(du) \ \nu_{n-q+\gamma}(dM)
\end{equation}
for a Borel sets $A\subseteq\{ E' \in A(n,n-q+\gamma) \ : \ d(o,E') =1 \}$. As in the previous section, we are interested in the distribution of $L\cap E$. Again, all relevant information is contained in the distribution of $d(o,E \cap L)$ due to rotation invariance. The next result is a stepping stone in the derivation of the probability density of the random variable $d(o,E \cap L)$.

\begin{lemma}\label{cor:Sphere}
Fix $n\geq 2$, $q\in\{1,\ldots,n-1\}$, $\gamma\in\{0,\ldots,q-1\}$ and let $f: A(n,\gamma) \rightarrow [0,\infty)$ be a measurable, rotation invariant and bounded function.
Then
	\begin{align*}
		\int_{G(n,q)}   &\int_{G(n,n-q+\gamma)} \int_{S^{n-1} \cap M^\perp }
		f((M+hu) \cap L)
		\ \mathcal{H}^{q-\gamma-1}(du)
		\ \nu_{n-q+\gamma}(dM)  \ \nu_q (dL)
		\\&
		=
		D(n,q,\gamma)
	{\omega_{n-\gamma}}h^{\gamma+1}
		\int_h^\infty 
		f_I(r) r^{-(\gamma+2)}  \Big( 1-\frac{h^2}{r^2} \Big) ^{\frac{n-q}{2}-1}
		\ dr
	\end{align*}
 for almost every $h>0$. 
\end{lemma}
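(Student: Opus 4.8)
The strategy is to deduce Lemma~\ref{cor:Sphere} from Theorem~\ref{ThmGeneral} (equivalently, Corollary~\ref{cor:Hindicator}) by differentiating in the tangency radius. The point is that the left-hand side of the lemma, after the substitution $hu \mapsto$ a point on the sphere of radius $h$, is essentially the $h$-derivative of the left-hand side of Corollary~\ref{cor:Hindicator}. Concretely, recall from \eqref{eq:randomDensFlatSphere} that $\omega_{q-\gamma}\sigma_{n-q+\gamma}$ is the image of the measure $\mathcal{H}^{q-\gamma-1}\!\restriction_{S^{n-1}\cap M^\perp}\otimes\,\nu_{n-q+\gamma}$ under $(u,M)\mapsto M+u$, while by \eqref{eq:DefMuq} the measure $\mu_{n-q+\gamma}$ restricted to $[hB^n]_{n-q+\gamma}$ is obtained by integrating $\lambda_{M^\perp}$ over the ball of radius $h$ in $M^\perp$ and then over $\nu_{n-q+\gamma}(dM)$. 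Passing to polar coordinates in $M^\perp$ (which has dimension $q-\gamma$) shows that
\[
\mu_{n-q+\gamma}\!\restriction_{[hB^n]_{n-q+\gamma}}(\,\cdot\,)
=\int_0^h r^{q-\gamma-1}\Big(\int_{G(n,n-q+\gamma)}\int_{S^{n-1}\cap M^\perp} 1_{\{M+ru\in\,\cdot\,\}}\,\mathcal H^{q-\gamma-1}(du)\,\nu_{n-q+\gamma}(dM)\Big)\,dr .
\]
Hence the inner double integral in Lemma~\ref{cor:Sphere}, evaluated at radius $h$, is exactly $h^{-(q-\gamma-1)}$ times the $h$-derivative of $\int_{G(n,q)}\int_{[hB^n]_{n-q+\gamma}}f(E\cap L)\,\mu_{n-q+\gamma}(dE)\,\nu_q(dL)$.

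First I would fix a bounded rotation-invariant $f$ and write $\Phi(h)$ for the left-hand side of Corollary~\ref{cor:Hindicator} with reference ball $hB^n$. By that corollary,
\[
\Phi(h)=D(n,q,\gamma)\int_{A(n,\gamma)} f(E)\,d(o,E)^{-(n-q)}J_{H_h}(d(o,E))\,\mu_\gamma(dE).
\]
Next I would evaluate the right-hand side explicitly: apply the decomposition \eqref{eq:DefMuq} for $\mu_\gamma$, use $d(o,L+x)=\|x\|$ for $x\in L^\perp$, introduce polar coordinates in $L^\perp$ (dimension $n-\gamma$, contributing the factor $\omega_{n-\gamma}$ and a radial weight $\rho^{n-\gamma-1}$), and invoke rotation invariance of $f$ to replace $f(E)$ by $f_I(\rho)$. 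This yields
\[
\Phi(h)=D(n,q,\gamma)\,\omega_{n-\gamma}\int_0^\infty f_I(\rho)\,\rho^{n-\gamma-1}\rho^{-(n-q)}J_{H_h}(\rho)\,d\rho
=D(n,q,\gamma)\,\omega_{n-\gamma}\int_0^\infty f_I(\rho)\,\rho^{q-\gamma-1}J_{H_h}(\rho)\,d\rho.
\]
Now insert the explicit form \eqref{eq:JHr} of $J_{H_h}$. For $\rho\le h$ it is the constant $\tfrac{\omega_{n+1}}{\omega_{q+1}\omega_{n-q}}$, and for $\rho>h$ it equals $\tfrac12 B\big((h/\rho)^2;\tfrac{q+1}2,\tfrac{n-q}2\big)=\tfrac12\int_0^{(h/\rho)^2} t^{(q+1)/2-1}(1-t)^{(n-q)/2-1}\,dt$. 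Splitting the $\rho$-integral accordingly and differentiating in $h$: the contribution of $\rho\le h$ and the ``constant part'' of the $\rho>h$ piece combine so that only the derivative of the incomplete beta integral survives. Using $\tfrac{d}{dh}\tfrac12 B\big((h/\rho)^2;\tfrac{q+1}2,\tfrac{n-q}2\big)=\tfrac{h}{\rho^2}\big(\tfrac{h^2}{\rho^2}\big)^{(q+1)/2-1}\big(1-\tfrac{h^2}{\rho^2}\big)^{(n-q)/2-1}=h^{q}\rho^{-(q+1)}\big(1-h^2/\rho^2\big)^{(n-q)/2-1}$ (valid for $\rho>h$, and the boundary term at $\rho=h$ vanishes because the integrand is $0$ there), I obtain
\[
\Phi'(h)=D(n,q,\gamma)\,\omega_{n-\gamma}\,h^{q}\int_h^\infty f_I(\rho)\,\rho^{q-\gamma-1}\rho^{-(q+1)}\Big(1-\frac{h^2}{\rho^2}\Big)^{\frac{n-q}{2}-1}d\rho
=D(n,q,\gamma)\,\omega_{n-\gamma}\,h^{q}\int_h^\infty f_I(\rho)\,\rho^{-(\gamma+2)}\Big(1-\frac{h^2}{\rho^2}\Big)^{\frac{n-q}{2}-1}d\rho.
\]
On the other hand, by the polar-coordinate identity above, $\Phi(h)=\int_0^h r^{q-\gamma-1}\Psi(r)\,dr$ where $\Psi(r)$ is the inner double integral appearing in Lemma~\ref{cor:Sphere}; hence for almost every $h$, $\Phi'(h)=h^{q-\gamma-1}\Psi(h)$, so $\Psi(h)=h^{-(q-\gamma-1)}\Phi'(h)=D(n,q,\gamma)\,\omega_{n-\gamma}\,h^{\gamma+1}\int_h^\infty f_I(r)\,r^{-(\gamma+2)}(1-h^2/r^2)^{(n-q)/2-1}\,dr$, which is exactly the claimed formula.

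\textbf{Main obstacle.} The only delicate points are analytic rather than geometric: justifying the differentiation under the integral sign in $h$ (for which boundedness of $f$, together with the integrable radial weights and the factor $(1-h^2/\rho^2)^{(n-q)/2-1}$ near $\rho=h$ when $n-q\ge 2$, or a small additional care when $n-q=1$, is used), and the fact that the identity $\Phi'(h)=h^{q-\gamma-1}\Psi(h)$ holds only for Lebesgue-almost every $h$ — which is precisely why the statement of Lemma~\ref{cor:Sphere} is phrased ``for almost every $h>0$''. I would also double-check the polar-coordinate bookkeeping in $M^\perp$ and $L^\perp$, since the exponents $q-\gamma-1$, $n-\gamma-1$ and the cancellation $n-\gamma-1-(n-q)=q-\gamma-1$ are what make the radial powers line up; these are routine but must be done carefully to land the exact constant $D(n,q,\gamma)\,\omega_{n-\gamma}$ and the exponent $h^{\gamma+1}$.
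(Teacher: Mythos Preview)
Your proposal is correct and follows essentially the same route as the paper: define the function $F(h)$ (your $\Phi(h)$) via Corollary~\ref{cor:Hindicator}, rewrite the right-hand side in radial form using \eqref{eq:DefMuq} and spherical coordinates in $L^\perp$, differentiate $J_{H_h}$ in $h$, and match this against the Lebesgue-differentiation computation of $F'(h)$ obtained after introducing polar coordinates in $M^\perp$ on the left-hand side. One small remark: your sentence ``the boundary term at $\rho=h$ vanishes because the integrand is $0$ there'' is not quite accurate---the Leibniz boundary contributions from the two pieces $\int_0^h$ and $\int_h^\infty$ do not individually vanish but cancel (since $\tfrac12 B(1;\tfrac{q+1}{2},\tfrac{n-q}{2})=\omega_{n+1}/(\omega_{q+1}\omega_{n-q})$), which is what your earlier phrase ``combine so that only the derivative of the incomplete beta integral survives'' already captures; the paper sidesteps this bookkeeping by directly computing $\partial_h J_{H_h}(r)$ pointwise.
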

\begin{proof}
	Let $f:A(n, \gamma) \rightarrow [0, \infty)$ satisfy the above assumptions and define the function $F:(0,\infty)\to [0, \infty)$ by 
	\begin{align*}
		F(h)
		&=
		D(n,q,\gamma)\int_{A(n,\gamma)} f(E) d(o,E)^{-(n-q)} J_{H_h}(d(o,E)) \ \mu_\gamma(dE)
		\\& =D(n,q,\gamma)
	\omega_{n-\gamma} \int_0^\infty f_I(r) r^{q-\gamma-1}
		J_{H_h}(r) \ dr,
	\end{align*}
	with $D(n,q,\gamma)$ as in Theorem \ref{ThmGeneral} and $J_{H_h}$ given by \eqref{eq:JHr}. Since
	\begin{align*}
		\frac{\partial}{\partial h}
		J_{H_h}(r)=
		\begin{cases}
			0 &: 0\le r < h
			\\
			r^{-q-1} h^q \Big( 1-\frac{h^2}{r^2} \Big) ^{\frac{n-q}{2}-1} &: r>h, 
		\end{cases}
	\end{align*}
	we conclude that $F(h)$ is differentiable for almost all $h>0$
    with derivative
	\begin{equation*}
		F'(h)
		=
		D(n,q,\gamma)
	\omega_{n-\gamma} 
		\int_h^\infty 
		f_I(r) r^{-\gamma-2} h^q \Big( 1-\frac{h^2}{r^2} \Big) ^{\frac{n-q}{2}-1}
		\ dr.
	\end{equation*}
    Here, differentiation under the integral sign can be justified as follows.
For fixed $r>0$, the integrand $g(r,h)=f_I(r)r^{q-\gamma-1} J_{H_h}(r)$ is absolutely continuous as function of $h$. Since $(r,h)\mapsto \partial /(\partial h) g(r,h)$ is integrable on $(0,\infty)\times (a,b)$ for any $0<a<b<\infty$, Fubini's theorem and Lebesgue differentiation theorem imply 
\[
F'(h)=\frac{d}{dh}\int_1^h \int_0^\infty \frac{\partial}{\partial h}  g(r,t)drdt=\int_0^\infty \frac{\partial}{\partial h}  g(r,h)dr
\]
for a.e.~$h\in (a,b)$, yielding the intermediate assertion. 
    
     On the other hand,  Theorem \ref{ThmGeneral}, relation \eqref{eq:DefMuq} and spherical integration yield
	\begin{equation*}
		F(h)
		=
		\int_{G(n,q)}   \int_{G(n,n-q+\gamma)} \int_{S^{n-1} \cap M^\perp }
		\int_0^h 
		f((M+ru) \cap L)
		r^{q-\gamma-1}
		\ dr \ \mathcal{H}^{q-\gamma-1}(du)
		\ \nu_{n-q+\gamma}(dM) \nu_q (dL),
	\end{equation*}
	and hence
	\begin{equation*}
		F'(h)
		=
		\int_{G(n,q)}   \int_{G(n,n-q+\gamma)} \int_{S^{n-1} \cap M^\perp }
		f((M+hu) \cap L)
		h^{q-\gamma-1}
		\ \mathcal{H}^{q-\gamma-1}(dy)
		\ \nu_{n-q+\gamma}(dM) \nu_q (dL)
	\end{equation*}
    for almost every $h>0$ due to the Lebesgue differentiation theorem.
	This completes the proof of the lemma.
\end{proof}

		\begin{theorem}\label{thm:Application2}
			Fix $n\geq 2$, $q\in\{1,\ldots,n-1\}$ and $\gamma\in\{0,\ldots,q-1\}$. Let $L$ be a random linear subspace with distribution $\nu_q$ and let $E$ be a stochastically independent random affine subspace, tangent to the unit sphere, with distribution $\sigma_{n-q+\gamma}$ given by \eqref{eq:randomDensFlatSphere}.
			Then the random variable $d(o,E\cap L)^{-2}$ has a beta distribution with shape parameters 
			 \[
			 a=\frac{\gamma+1}2 \qquad\text{ and  }\qquad  b=\frac{n-q}2.
			 \]
			 More explicitly, 
			\begin{align*}
				g_{n,q,\gamma}(r) =   {\omega_{\gamma+1}\omega_{n-q}\over \omega_{n-(q-\gamma)+1}}\,
				r^{-(\gamma+2)}  \Big( 1-\frac{1}{r^2} \Big) ^{\frac{n-q}{2}-1}\,1_{\{r>1\}}
			\end{align*}
			is 	a  probability density for 
			$d(o,E\cap L)$. 
		\end{theorem}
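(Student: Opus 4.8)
The plan is to deduce Theorem \ref{thm:Application2} directly from Lemma \ref{cor:Sphere} by specializing $h=1$ and normalizing. First I would apply Lemma \ref{cor:Sphere} with a bounded, rotation invariant $f:A(n,\gamma)\to[0,\infty)$; the left-hand side, after dividing by $\omega_{q-\gamma}$, is exactly $\mathbb{E}\,f(E\cap L)$ under the stated joint distribution, because of the explicit form \eqref{eq:randomDensFlatSphere} of $\sigma_{n-q+\gamma}$ and the independence of $L$ and $E$. The one subtlety is that Lemma \ref{cor:Sphere} only gives the identity for \emph{almost every} $h>0$, whereas we need it at the specific value $h=1$. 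I would resolve this by a continuity/scaling argument: replacing $E$ by $hE$ (equivalently rescaling) turns the statement at a generic $h$ into the statement at $h=1$, so the claimed density is independent of which good $h$ we picked, and one valid choice suffices; alternatively, one observes that both sides of the desired identity, viewed as functions of $h$, are continuous, so the a.e.\ identity upgrades to an everywhere identity. This is the step I expect to require the most care to phrase cleanly.

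Next I would read off the density. With $h=1$, Lemma \ref{cor:Sphere} gives
\begin{align*}
\mathbb{E}\,f(E\cap L)
&=\frac{1}{\omega_{q-\gamma}}D(n,q,\gamma)\,\omega_{n-\gamma}
\int_1^\infty f_I(r)\,r^{-(\gamma+2)}\Bigl(1-\tfrac{1}{r^2}\Bigr)^{\frac{n-q}{2}-1}\,dr .
\end{align*}
Since $E\cap L$ is rotation invariant in distribution, $\mathbb{E}\,f(E\cap L)=\int_0^\infty f_I(r)\,g_{n,q,\gamma}(r)\,dr$ for the sought density $g_{n,q,\gamma}$ of $d(o,E\cap L)$, because integrating a rotation invariant $f$ against the law of $E\cap L$ only sees $f_I$ and the radial marginal. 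Matching the two integral expressions for arbitrary bounded rotation invariant $f$ (hence arbitrary bounded $f_I$ on $(0,\infty)$) forces
\begin{align*}
g_{n,q,\gamma}(r)=\frac{D(n,q,\gamma)\,\omega_{n-\gamma}}{\omega_{q-\gamma}}\,
r^{-(\gamma+2)}\Bigl(1-\tfrac{1}{r^2}\Bigr)^{\frac{n-q}{2}-1}\,1_{\{r>1\}} .
\end{align*}

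Then I would simplify the constant using the definition \eqref{eq:Ddef} of $D(n,q,\gamma)$. We have $D(n,q,\gamma)\,\omega_{n-\gamma}/\omega_{q-\gamma}=\bigl(\omega_{\gamma+1}\omega_{q-\gamma}\omega_{n-q}\bigr)\big/\bigl(\omega_{n-(q-\gamma)+1}\omega_{n-\gamma}\bigr)\cdot\omega_{n-\gamma}/\omega_{q-\gamma}=\omega_{\gamma+1}\omega_{n-q}/\omega_{n-(q-\gamma)+1}$, which is precisely the prefactor in the statement. Finally, to identify the law of $d(o,E\cap L)^{-2}$ as $\mathrm{Beta}(a,b)$ with $a=(\gamma+1)/2$, $b=(n-q)/2$, I would substitute $t=r^{-2}$, so $r=t^{-1/2}$, $dr=-\tfrac12 t^{-3/2}dt$, and $r>1\Leftrightarrow t\in(0,1)$; a direct computation shows $g_{n,q,\gamma}(r)\,dr$ pushes forward to $\mathrm{const}\cdot t^{a-1}(1-t)^{b-1}dt$ on $(0,1)$, and the normalizing constant is forced to be $1/B(a,b)$ since $g_{n,q,\gamma}$ is a probability density — which one can also verify directly from \eqref{eq:betaInt} together with the relation $\omega_{\gamma+1}\omega_{n-q}/\omega_{n-(q-\gamma)+1}=1/B\bigl(\tfrac{\gamma+1}{2},\tfrac{n-q}{2}\bigr)$, after noting $(n-(q-\gamma)+1)=(\gamma+1)+(n-q)$. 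This last identity is an immediate consequence of \eqref{eq:betaInt} and confirms both that $g_{n,q,\gamma}$ integrates to one and that $d(o,E\cap L)^{-2}\sim\mathrm{Beta}(a,b)$. The only genuine obstacle is the a.e.-to-everywhere passage for $h=1$ in Lemma \ref{cor:Sphere}; everything after that is bookkeeping with the $\omega_m$'s.
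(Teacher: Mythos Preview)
Your proposal is correct and follows essentially the same route as the paper: apply Lemma~\ref{cor:Sphere}, upgrade the a.e.\ identity in $h$ to $h=1$ by continuity (the paper does this via monotone convergence, which makes the continuity claim concrete), and then identify the resulting weight as the desired density. The only cosmetic difference is that the paper specializes to $f(E)=1_{\{d(o,E)\ge t\}}$ and matches tail probabilities against the Beta tail \eqref{eq:betaInt}, whereas you keep a general bounded rotation invariant $f$ and read off the density directly before doing the $t=r^{-2}$ substitution; both lead to the same computation.
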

	Figure \ref{fig:Gdensities} illustrates the variety of density functions $g_{n,q,\gamma}$. 
\begin{figure}[b]
	\centering
	\includegraphics[width=0.3\columnwidth]{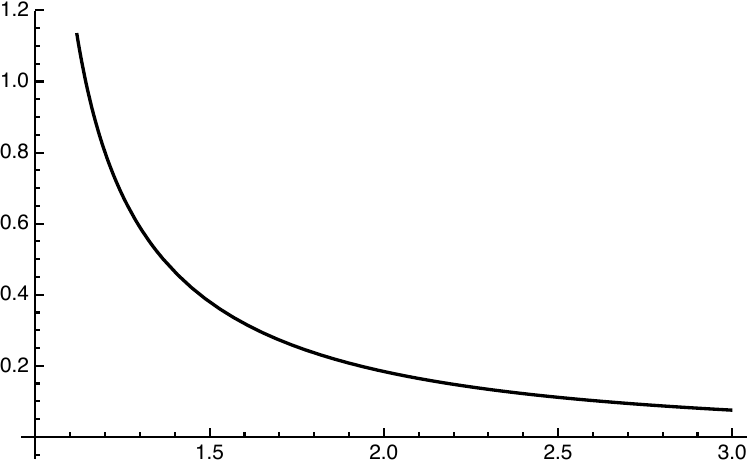}\qquad
	\includegraphics[width=0.3\columnwidth]{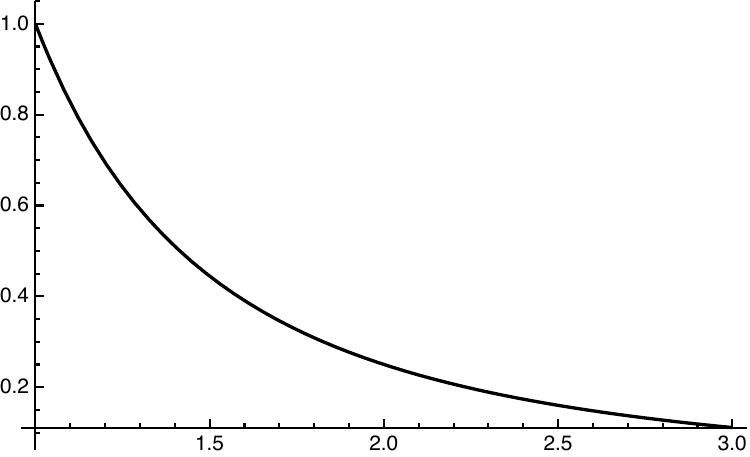}\qquad
	\includegraphics[width=0.3\columnwidth]{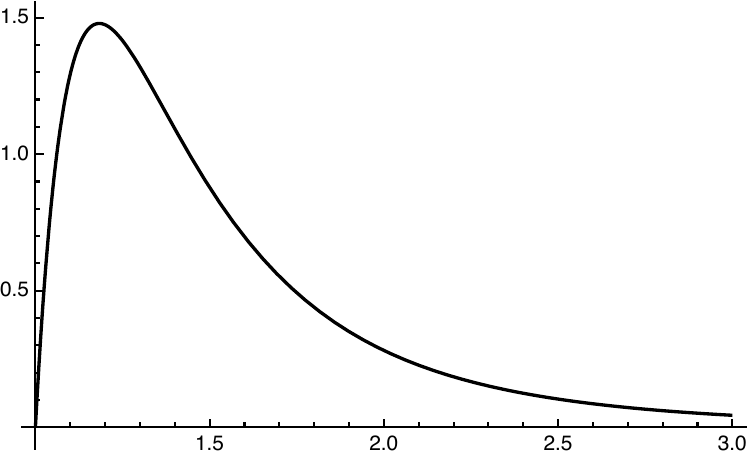}\qquad
	\caption{From left to right: The probability densities $g_{2,1,0}$, $g_{3,1,0}$ and $g_{9,5,3}$.}
	\label{fig:Gdensities}
\end{figure}
	
		\begin{proof}
	     Let $X$ be a beta-distributed random variable on $(0,1)$  with shape parameters $a,b>0$. Then $X^{-1/2}$ has tail probabilities 
	     \begin{align}\nonumber
	     1-F_{X^{-1/2}}(t)&= \mathbb{P}\Big[X\leq{1\over t^2}\Big] 
	     \\\nonumber
	     &= \frac1{B(a,b)}\int_{0}^{1/t^2}x^{a-1}(1-x)^{b-1}\ dx 
	     \\\label{eq:betadist}
	     &= \frac2{B(a,b)}\int_{t}^{\infty}r^{-2a-1}\Big(1-\frac1{r^2}\Big)^{b-1}\ dr, \qquad t>1. 
	     \end{align}
	     
		On the other hand, applying Lemma \ref{cor:Sphere} with $f(E)= 1_{\{d(o,E) \geq t \}}$ yields 
			\begin{align}\nonumber 
				\frac{1}{\omega_{q-\gamma}}    \int_{G(n,q)}   &\int_{G(n,n-q+\gamma)} \int_{S^{n-1} \cap M^\perp }
				1_{\{d(o,(M+hu) \cap L) \geq t\}}
				\ \mathcal{H}^{q-\gamma-1}(du)
				\ \nu_{n-q+\gamma}(dM) \nu_q (dL)
				\\&
				=
				D(n,q,\gamma)\frac{\omega_{n-\gamma} }
				{
					\omega_{q-\gamma}
				}h^{\gamma+1}
				\int_h^\infty 
				1_{\{r \geq t \}} r^{-(\gamma+2)}  \Big( 1-\frac{h^2}{r^2} \Big) ^{\frac{n-q}{2}-1}
				\ dr\label{eq:onemore}
			\end{align}
			for almost all $h>0$.
			Using monotone convergence, one can show that both sides of \eqref{eq:onemore} are  continuous in $h$, so this relation extends to all $h>0$. Letting $h=1$, we see that the tail distributions of $d(o,E \cap L)$ are proportional to 
			\[
			\int_{t}^\infty r^{-(\gamma+2)}  \Big( 1-\frac{1}{r^2} \Big) ^{\frac{n-q}{2}-1}
			\ dr, \qquad t>1. 
			\]
			A comparison with \eqref{eq:betadist} shows the first claim. The explicit density $g_{n,q,\gamma}$ is obtained by differentiation
			of \eqref{eq:onemore} when $h=1$.  
		\end{proof}
		 Let us remark in this context that random affine subspaces with the property that their squared distance to the origin follows a beta distribution arise naturally in the theory of random beta polytopes. For example, let $X_0,\ldots,X_r$ with $r\in\{1,\ldots,n\}$ be stochastically independent random points in $B^n$ with probability density proportional to $(1-\|x\|^2)^{\nu-2\over 2}$ for some $\nu>0$, and consider the random variable $d(o,M)^2$, where $M\in A(n,r)$ is the affine hull of $X_0,\ldots,X_r$. Then it follows from \cite[Thm.~2.7]{GroteKabluchkoThaele} that this random variable has the beta distribution $\operatorname{Beta}({n-r\over 2},{\nu(r+1)+r(n-1)\over 2})$.

	From the explicit density of $d(o,E\cap L)$ in Theorem \ref{thm:Application2}, the existence of moments can directly be read off. 
		
		\begin{corollary}
			Let $\alpha \in \mathbb{R}$. In the set-up just introduced, one has that $\mathbb{E}d(o, E \cap L)^\alpha< \infty$ if and only if $\alpha < \gamma+1$. 
			In particular, if $\gamma=0$ then the random variable $d(o, E \cap L)$ has infinite expectation. If $\gamma\ge 1$ we have 
			$$
			\mathbb{E}d(o,E\cap L) = {\omega_{\gamma+1}\omega_{n-q+\gamma}\over\omega_{\gamma}\omega_{n-(q-\gamma)+1}}
			=
			\frac{\omega_{n-q+\gamma}}{\omega_\gamma}
			(2 \pi)^{-(n-q)}. 
			$$
		\end{corollary}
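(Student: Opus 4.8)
The plan is to read off the existence of moments directly from the explicit density $g_{n,q,\gamma}(r) = {\omega_{\gamma+1}\omega_{n-q}/ \omega_{n-(q-\gamma)+1}}\, r^{-(\gamma+2)}(1-r^{-2})^{(n-q)/2-1}\,1_{\{r>1\}}$ obtained in Theorem \ref{thm:Application2}. First I would write $\mathbb{E}d(o,E\cap L)^\alpha = \int_1^\infty r^\alpha g_{n,q,\gamma}(r)\,dr$, up to the constant pulled out front. Near $r=1$ the integrand behaves like $(1-r^{-2})^{(n-q)/2-1}\sim c(r-1)^{(n-q)/2-1}$, which is always integrable at $r=1$ because $(n-q)/2-1 > -1$, i.e.\ $n-q \ge 1$ holds by the standing dimensional constraints. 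So the only constraint comes from the tail $r\to\infty$, where the integrand behaves like $r^{\alpha-\gamma-2}$; this is integrable at infinity if and only if $\alpha-\gamma-2 < -1$, i.e.\ $\alpha < \gamma+1$. This proves the first claim, and the special case $\gamma=0$ giving infinite expectation is immediate since then $\alpha=1 \not< 1$.

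For the explicit value of $\mathbb{E}d(o,E\cap L)$ when $\gamma\ge 1$, I would substitute $s = r^{-2}$, so that $r = s^{-1/2}$, $dr = -\tfrac12 s^{-3/2}\,ds$, and the integral $\int_1^\infty r\cdot r^{-(\gamma+2)}(1-r^{-2})^{(n-q)/2-1}\,dr$ becomes $\tfrac12\int_0^1 s^{(\gamma+1)/2 - 1}(1-s)^{(n-q)/2 - 1}\,ds = \tfrac12 B\big(\tfrac{\gamma+1}{2}, \tfrac{n-q}{2}\big)$. Multiplying by the normalizing constant $\omega_{\gamma+1}\omega_{n-q}/\omega_{n-(q-\gamma)+1}$ and invoking the identity \eqref{eq:betaInt} in the form $\tfrac12 B\big(\tfrac{\gamma+1}{2}, \tfrac{n-q}{2}\big) = \omega_{n-q+\gamma+1}/(\omega_{\gamma+1}\omega_{n-q})$ gives $\mathbb{E}d(o,E\cap L) = \omega_{n-q+\gamma+1}/\omega_{n-(q-\gamma)+1}$. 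To match the stated expression $\omega_{\gamma+1}\omega_{n-q+\gamma}/(\omega_\gamma\omega_{n-(q-\gamma)+1})$, I would rewrite $\omega_{n-q+\gamma+1}$: one uses the recursion $\omega_{m+1}/\omega_{m-1} = 2\pi/m$ (equivalently $\omega_{m+1} = \omega_{m-1}\cdot 2\pi/m$, which follows from \eqref{eq:KappanOmegan}), or more directly writes everything via gamma functions using \eqref{eq:KappanOmegan} and simplifies.

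For the final closed form $\mathbb{E}d(o,E\cap L) = \omega_{n-q+\gamma}\,(2\pi)^{-(n-q)}/\omega_\gamma$, I would pass entirely to gamma functions. Writing $\omega_m = 2\pi^{m/2}/\Gamma(m/2)$ throughout, the ratio $\omega_{\gamma+1}\omega_{n-q+\gamma}/(\omega_\gamma\omega_{n-(q-\gamma)+1})$ becomes, after cancelling the numerical factors of $2$ and collecting powers of $\pi$, a product of four gamma-function factors times a power of $\pi$; the power of $\pi$ works out to $-(n-q)$, and the gamma factors telescope using the duplication-type pattern $\Gamma(\tfrac{\gamma+1}{2})/\Gamma(\tfrac{\gamma}{2})$ against $\Gamma(\tfrac{n-q+\gamma}{2})/\Gamma(\tfrac{n-q+\gamma+1}{2})$ — note $n-(q-\gamma)+1 = n-q+\gamma+1$ — so that what survives is exactly $\omega_{n-q+\gamma}/\omega_\gamma$ up to the explicit factor $(2\pi)^{-(n-q)}$. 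The only mild obstacle is bookkeeping the gamma-function and $\pi$-power arithmetic correctly; there is no conceptual difficulty, since the density is already explicit and all needed identities are recorded in \eqref{eq:KappanOmegan} and \eqref{eq:betaInt}.
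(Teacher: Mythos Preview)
Your approach---reading the moment condition and the expectation directly from the explicit density $g_{n,q,\gamma}$ of Theorem~\ref{thm:Application2}---is exactly what the paper intends, and your integrability analysis near $r=1$ and at $r\to\infty$ is correct.

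However, your substitution in the expectation computation contains an arithmetic slip. With $s=r^{-2}$, $dr=-\tfrac12 s^{-3/2}\,ds$, and $r^{-(\gamma+1)}=s^{(\gamma+1)/2}$, the exponent of $s$ becomes $\tfrac{\gamma+1}{2}-\tfrac32=\tfrac{\gamma}{2}-1$, not $\tfrac{\gamma+1}{2}-1$. Hence the integral is $\tfrac12 B\big(\tfrac{\gamma}{2},\tfrac{n-q}{2}\big)$, not $\tfrac12 B\big(\tfrac{\gamma+1}{2},\tfrac{n-q}{2}\big)$. With the correct parameter and \eqref{eq:betaInt} you obtain
\[
\mathbb{E}d(o,E\cap L)=\frac{\omega_{\gamma+1}\omega_{n-q}}{\omega_{n-(q-\gamma)+1}}\cdot\frac{\omega_{n-q+\gamma}}{\omega_\gamma\,\omega_{n-q}}
=\frac{\omega_{\gamma+1}\omega_{n-q+\gamma}}{\omega_\gamma\,\omega_{n-(q-\gamma)+1}},
\]
which is the stated value, and no further ``rewriting'' of $\omega_{n-q+\gamma+1}$ is needed. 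Your version should have raised a red flag: since $n-(q-\gamma)+1=n-q+\gamma+1$, your intermediate answer $\omega_{n-q+\gamma+1}/\omega_{n-(q-\gamma)+1}$ equals $1$ identically, which cannot be right.

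A separate remark: the paper's second displayed equality $\dfrac{\omega_{\gamma+1}\omega_{n-q+\gamma}}{\omega_\gamma\,\omega_{n-(q-\gamma)+1}}=\dfrac{\omega_{n-q+\gamma}}{\omega_\gamma}(2\pi)^{-(n-q)}$ does not hold in general (e.g.\ $n=3$, $q=2$, $\gamma=1$ gives $\pi/2$ on the left and $1/2$ on the right), so your third paragraph is attempting to verify a misprint. The substantive content of the corollary is the moment criterion and the first expression for the expectation, both of which follow cleanly once the substitution is corrected.
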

        Applying Stirling's formula \cite[\S 6.1.37]{AbramowitzStegun} we conclude that for fixed $\gamma\ge 1$,
		$$
		\frac{\mathbb{E}d(o,E\cap L)}{\sqrt{n}} \longrightarrow  {1\over\sqrt{2\pi}}{\omega_{\gamma+1}\over\omega_\gamma},
		$$
		as $n\to\infty$, independently of $q$.
	
\section{Proofs of Theorem \ref{ThmGeneral},  Corollary \ref{cor:MultipleIntersections}
and Theorem \ref{ThmGeneral1}}\label{sec:Proof}

\begin{proof}[Proof of Theorem \ref{ThmGeneral}]
Given $E\in A(n,q)$, $q\in\{0,\ldots,n\}$, we will write $\lin(E)$ for the linear subspace in $G(n,q)$ parallel to $E$. In other words, $\lin(E)=E-x$ for all $x\in E$. 

For the proof of Theorem \ref{ThmGeneral}, fix  $n\geq 1$, $q\in\{1,\ldots,n-1\}$ and $\gamma\in\{0,\ldots,q-1\}$, and let $f: A(n, \gamma)\to [0,\infty)$ be a measurable function. Due to Lemma \ref{Lemma:EnoughRotational}, we may assume without loss of generality that $f$ is rotation invariant. The integral of interest is
\begin{equation*}
	I=
	\int_{G(n,q)}\int_{A(n,n-q+\gamma)} f(E \cap L) H(E)  \ \mu_{n-q+\gamma}(dE) \ \nu_{q}(dL).
\end{equation*}
 First note that $I$ is indeed well-defined as for $\nu_q$-almost all $L \in G(n,q)$ and $\mu_{n-q+\gamma}$-almost all $E \in A(n,n-q+\gamma)$ we have $\dim (E \cap L)=\gamma$. This follows from 
 \cite[Lem.~13.2.1]{SchWeil2008}, applied 
 to $L$ and $L'=\lin(E)$. 
 
 Define the function $g: A(n,q) \times A(n,n-q+\gamma) \to \mathbb{R}$ by
\begin{align*}
	g(E_1,E_2) =  f\big(\lin (E_1) \cap E_2\big) 1_{\{d(o,E_1)\leq 1 \}}H(E_2),
\end{align*}
where we let $f(\lin (E_1) \cap E_2)=0$ if $\dim (\lin (E_1) \cap  E_2)\ne \gamma$. Definition \eqref{eq:DefMuq} implies 
\begin{equation*}
	I=\frac{1}{\kappa_{n-q}}
	\int_{A(n,q)}\int_{A(n,n-q+\gamma)} g(E_1,E_2) \, \mu_{n-q+\gamma}(dE_2) \mu_{q}(dE_1).
\end{equation*}
Applying \cite[Thm.~7.2.8]{SchWeil2008} with $s_1=q$ and $s_2 = n-q+\gamma$ yields 
\begin{align}
	I
	=
	\frac{\Bar{b}}{\kappa_{n-q} }
	\int_{A(n,\gamma)} 
	\int_{A(E,q)} 
	\int_{A(E,n-q+\gamma)}
	& g(E_1,E_2) [\lin(E_1), \lin(E_2)]^{\gamma+1} \nonumber
	\\& \times \ \mu_{n-q+\gamma}^E (dE_2)  \ \mu_{q}^E(dE_1) \  \mu_\gamma(dE),  
	\label{eq:dummy1}
\end{align}
with the constant $\Bar{b}$ given by
\begin{equation}
	\Bar{b}
	=
	b_{n,n-\gamma} \frac{
		b_{n-\gamma,n-q}
		b_{n-\gamma, q-\gamma}
	}{ 
		b_{n,n-q}
		b_{n,q-\gamma}},\qquad b_{i,j} = {\omega_{i-j+1}\cdots\omega_{i}\over\omega_1\cdots\omega_j},\ i\in \N,j\in\{1,\ldots,i\}.
  \label{eq:ovB}
\end{equation}
Expanding \eqref{eq:dummy1} by decomposing the measure $\mu_{\gamma}$ according to \eqref{eq:DefMuq},
applying \cite[Eq.~(13.14)]{SchWeil2008} and Tonelli's theorem yields 
\begin{equation} 
	I
	=
	\frac{\Bar{b}}{\kappa_{n-q} }
	\int_{G(n, \gamma)}
	I_1(L_0,f) \ \nu_\gamma(dL_0) \label{eq:dummy1.5}
\end{equation}
with
\begin{align}
	I_1(L_0,f) \nonumber
	& =
	\int_{G(L_0,q)}
	\int_{G(L_0,n-q+\gamma)} 
	\int_{L_0^\perp }
	f((M+t)\cap L) 
   1_{\{d(o,L+t) \leq 1\}} H(M+t) \nonumber
	\\& 
	\hspace{4cm} \times \ \lambda_{L_0^\perp}(dt)  [M, L]^{\gamma+1}\ \nu_{n-q+\gamma}^{L_0} (dM)  
	\ \nu_{q}^{L_0}(dL). \nonumber
\end{align}
Since
\begin{equation*}
	\int_{G(L_0,p)} h(L) \ \nu_p^{L_0}(dL)
	=
	\int_{G(L_0^\perp, p-\gamma)} h(L_0+L) \ \nu_{p-\gamma}^{L_0^\perp}(dL)
\end{equation*}
for all $L_0 \in G(n, \gamma)$ and for any measurable function $h:G(L_0,p)\to[0,\infty)$ and integers $\gamma < p <n$, we conclude 
\begin{equation}
	\begin{split}
		I_1(L_0,f) & = 
		\int_{G(L_0^\perp,q-\gamma)} 
		\int_{G(L_0^\perp,n-q)} 
		\int_{L_0^\perp }
		f\Big(\big((M+t)\cap L\big)+ L_0 \Big) 1_{\{d(o,L+t) \leq 1\}} \\& 
		\qquad \qquad \times H(M+L_0+t) \ \lambda_{L_0^\perp}(dt) 
  [M, L]^{\gamma+1}
  \ \nu_{n-q}^{L_0^\perp} (dM)  
		\ \nu_{q-\gamma}^{L_0^\perp}(dL),    \label{eq:dummy2}
	\end{split}
\end{equation}
where we also have used $(M+L_0+t) \cap (L+L_0) = \big( (M+t) \cap L \big)+L_0$,  $d(o,L+L_0+t)=d(o,L+t)$ and $[M+L_0, L+L_0]= [M,L]$.

Now consider \eqref{eq:dummy2} for fixed  $L_0 \in G(n, \gamma)$, $M \in G(L_0^\perp, n-q)$ and $L \in G(L_0^\perp, q-\gamma)$. 
As $f$ and $H$ are both rotation invariant, we may write
\begin{align*}
	f\big(((M+t)\cap L)+ L_0\big) 
	&=
	f_I\big( d(o, (M+t) \cap L) \big),
	\\
	H(M+L_0+t)&= H_I(\|t|M^\perp \|),
\end{align*}
with $f_I$ and $H_I$ satisfying \eqref{eq:fI}. Thus, we may identify $L_0^\perp$ with $\mathbb{R}^{n-\gamma}$ and conclude that
\begin{align}
	I_1(L_0,f)
	&=
	\int_{G(n-\gamma,q-\gamma)} 
	\int_{G(n-\gamma,n-q)} I_2(L,M)
	[M,L]^{\gamma+1}  \nu_{n-q} (dM) 
	\ \nu_{q-\gamma}(dL),
 \label{eq:I1in_Rn-q}
\end{align}
with
\begin{align*}
	I_2(M,L)
	&=
	\int_{\mathbb{R}^{n-\gamma}}
	f_I(d(o,(M+t)\cap L))    1_{\{d(o,L+t) \leq 1\}} H_I(\|t|M^\perp \|)
	\ \lambda_{ {n-\gamma}}(dt)
	\\
	&=
	\int_{M} \int_{M^\perp} 
	f_I( d(o,(M+x)\cap L)) 
	H_I(\|x \|)  1_{\{d(o,L+x+y) \leq 1\}} \ 
	\ \lambda_{M^\perp}(dx) \ \lambda_M(dy) 
\end{align*}
for $M \in G(n-\gamma, n-q)$ and $L \in G(n-\gamma,q-\gamma)$. Applying \eqref{eq:IntroProjectionFormula} to the Lebesgue integral over $M^\perp$ we get
\begin{align*}
	I_2(M,L) &= [M,L] 
	\int_{M} \int_{L}
	f_I( d(o,(M+t)\cap L)) 
	H_I(\|t|M^\perp  \|)  1_{\{d(o,L+t|M^\perp+y) \leq 1\}} \ 
	\ \lambda_{L}(dt) \ \lambda_M(dy) 
	\\
	&
	=   \int_{L}  f_I( d(o,(M+t)\cap L))  H_I(\|t|M^\perp  \|) I_3(M,L,t) \ \lambda_{L}(dt),
\end{align*}
where we first used the fact that 
$M+t|M^\perp=M+t$ holds for any $t \in L$, and then Tonelli's theorem.  
Here, 
\[
I_3(M,L,t) =[M,L]
 \int_{M}  
	1_{\{\|(t| M^\perp)|L^\perp+y|L^\perp \| \leq 1\}} \ 
\lambda_M(dy).
\]
Another application of  \eqref{eq:IntroProjectionFormula} 
reveals that 
\[
I_3(M,L,t) = \int_{L^\perp}  
	1_{\{\|(t| M^\perp)|L^\perp+z \| \leq 1\}} \ 
\lambda_{L^\perp}(dz)=\kappa_{n-q}
\]
is the volume of a unit ball in $L^\perp$, centered at $(t| M^\perp)|L^\perp$. Inserting this into $I_2$ gives 
\begin{equation*}
	I_2(M,L)
	= \kappa_{n-q} \int_{L}  f_I( d(o,(M+t)\cap L))  H_I(\|t|M^\perp  \|)  \ \lambda_{L}(dt)  .  
\end{equation*}
Since $t\in L$ and  $M\cap L=\{o\}$ for almost all $L$ and $M$, we have 
\[
d(o,(M+t) \cap L)=
d(o,\{t\})=\|t\|.
\]
This, and the use of 
spherical coordinates in $L$ give
\begin{equation*}
	I_2(M,L)
	=\kappa_{n-q}
	\int_0^\infty f_I(r) r^{q-\gamma-1}
	\int_{S^{n-\gamma-1}\cap L}
	H_I(r [u,M])
	\ \mathcal{H}^{q-\gamma-1}(du) \ dr.
\end{equation*}
Inserting this into \eqref{eq:I1in_Rn-q} 
and the result into \eqref{eq:dummy1.5}, 
we get after an application of Tonelli's theorem that 
\begin{align}\label{eq:IJM}
	I
	&=\bar{b}
	\int_0^\infty f_I(r) r^{q-\gamma-1} \int_{G(n-\gamma,n-q)} J_H(M,r)
	\ \nu_{n-q} (dM)
	\ dr,
\end{align}
with 
\begin{align*}
	J_H(M,r)=  
	\int_{G(n-\gamma,q-\gamma)} 
	\int_{S^{n-\gamma-1}\cap L} 
	H_I(r [u,M])\mathcal{H}^{q-\gamma-1}(du) 
   [M,L]^{\gamma+1}
	\ \nu_{q-\gamma}(dL). 
\end{align*}
An invariance argument and  \cite[Thm.~7.1.1]{SchWeil2008} imply 
\begin{align*}
	J_H(M,r)=\frac{\omega_{q-\gamma}}{\omega_{n-\gamma}}  
	\int_{S^{n-\gamma-1}} 
	H_I(r [u,M])
	\int_{G(\myspan u, q-\gamma)}
	[M,L]^{\gamma+1}
	\ \nu_{q-\gamma}^{\myspan u}(dL)
	\  \mathcal{H}^{n-\gamma-1}(du) . 
\end{align*}
 Applying Lemma \ref{Lemma:IntegralSubspaceDet} in $\R^{n-\gamma}$ to the innermost integral in $J_H(M,r)$ yields
\begin{equation*}
	J_H(M,r)
	=
	c_1\int_{S^{n-\gamma-1}}
	H_I(r[u,M])
	[u,M]^{\gamma+1}
	\ \mathcal{H}^{n-\gamma-1}(du)
\end{equation*}
with the constant 
$c_1=a(n-\gamma,n-q,q- \gamma,\gamma+1)\frac{\omega_{q-\gamma}}{\omega_{n-\gamma}}$. 
To simplify $J_H(M,r)$ further, we use the fact that  $[u,M]=\|u|M^\perp\|$ and apply \cite[Lem.~1]{Auneau2010} with $B_p= M^\perp$, $p=q-\gamma$ and $d= n-\gamma$. This yields
\begin{align*}
	J_H(M,r)
	&
	= \frac{c_1}{2} \omega_{n-q} \omega_{q-\gamma}
	\int_0^1
	H_I(r t^{\frac{1}{2}}) t^{ \frac{q-1}{2}}(1-t)^{ \frac{n-q}{2}-1}
	\ dt 
	\\
	&=
	c_1\omega_{n-q} \omega_{q-\gamma}
	\int_0^1
	H_I(r z) z^{ q}(1-z^2)^{ \frac{n-q}{2}-1}
	\ dz, 
\end{align*}
using the substitution $z= \sqrt{t}$ in the last step. A comparison with \eqref{eq:defJ} gives $J_H(M,r)=c_1\omega_{n-q} \omega_{q-\gamma} J_H(r)$, so abbreviating 
\[
 c_2=\bar{b}c_1\frac{\omega_{n-q} \omega_{q-\gamma}}{\omega_{n-\gamma}},
\]
\eqref{eq:IJM} becomes 
\begin{align*}
	I
	&=c_2\omega_{n-\gamma}
	\int_0^\infty f_I(r) r^{q-\gamma-1}J_H(r)\ dr
 \\
 &= c_2\int_{A(n, \gamma)}
	f( E) d(o,E)^{-(n-q)} J_H(d(o,E)) \ \mu_\gamma(dE),
\end{align*}
where we used \eqref{eq:DefMuq} and spherical coordinates in $\lin(E)$. 
Hence, the theorem is shown once we have confirmed that 
\begin{equation} 
c_2=D(n,q,\gamma). 
\label{eq:!!!const}
\end{equation}
We have 
			 \[			 c_1
    =\frac{\omega_{q-\gamma}}{\omega_{n-\gamma}}
  \prod_{i=1}^{n-q}
	\frac{\Gamma(\frac{n-\gamma-i}{2})
		\Gamma(\frac{n-q-i+\gamma}{2}+1)
	}
	{
		\Gamma( \frac{n-i+1}{2})
		\Gamma( \frac{n-q-i+1}{2})
	}
	=
    \frac{\omega_{q-\gamma}}{\omega_{n-\gamma}}
			 \prod_{i=1}^{n-q}
			 	\frac{
				 		\omega_{n-i+1}
				 		\omega_{n-q-i+1}
				 	}
			 	{
				 		\omega_{n-\gamma-i}
				 		\omega_{n-q+\gamma+2-i}
				 	},
			 \]
so direct insertion gives
\begin{align}
	\bar{b}c_1
	&= \nonumber
	\Bigg(
	\frac{
    \omega_{\gamma+1}\cdots \omega_n 
	}
	{
        \omega_1 \cdots \omega_{n-\gamma}
	}
	\cdot
	\frac{
  \omega_{q-\gamma+1} \cdots \omega_{n-\gamma}
}
	{
		\omega_{q+1}\cdots \omega_{n}
	}
	\cdot
	\frac{
\omega_{n-q+1}\cdots \omega_{n-\gamma}
 }
	{
\omega_{n-(q-\gamma)+1}\cdots \omega_{n}
    }
	\Bigg)
 \\
	&\nonumber\hspace{4cm}\times
	\Bigg(
	\frac{
		\omega_{q+1}\cdots \omega_n
	}
	{
    \omega_{q-\gamma}
		\cdots \omega_{n-\gamma-1}
}
	\cdot
	\frac{
     \omega_{1}\cdots \omega_{n-q}
}
	{
  \omega_{\gamma+2}
		\cdots
		\omega_{n-(q-\gamma)+1}
 }
	\Bigg)\frac{\omega_{q-\gamma}}{\omega_{n-\gamma}}
	\\
 &= \nonumber
	\Bigg(
	\frac{\omega_{q+1}\cdots \omega_{n}
	}
	{
		\omega_{q+1}\cdots \omega_{n}
	}
	\cdot
	\frac{
        \omega_{\gamma+1}\cdots \omega_n 
}
	{
\omega_{\gamma+2}\cdots \omega_{n}	
	}
 \cdot
 \frac{1}{
 \omega_{n-(q-\gamma)+1}  
 }
	\Bigg)\\
	&\nonumber\hspace{4cm}\times
	\Bigg(
 \frac{
  \omega_{q-\gamma} \cdots \omega_{n-\gamma}
 }{
 \omega_{q-\gamma}
		\cdots \omega_{n-\gamma}}
	\cdot\frac{
 \omega_{1} \cdots \omega_{n-\gamma} 
	}
	{
 \omega_{1} \cdots \omega_{n-\gamma}
}
	\Bigg)
 \\
 &=\nonumber
 \frac{\omega_{\gamma+1}}
 {\omega_{n-(q-\gamma)+1}},
\end{align}
where the products were suitably sorted at the second equality sign, and simplified at the third equality sign. We thus get 
\begin{equation*}
	c_2=\frac{\omega_{\gamma+1} \omega_{q-\gamma}\omega_{n-q}}{\omega_{n-(q-\gamma)-1}\omega_{n-\gamma}} =D(n,q,\gamma).
\end{equation*}
This shows \eqref{eq:!!!const} and completes the proof.
\end{proof}

\begin{proof}[Proof of Corollary \ref{cor:MultipleIntersections}]
For a Borel set $B\subseteq G(n,q)$,  define 
\[
\tilde \nu_q(B)=
\int_{G(n,q_1)}\cdots\int_{G(n,q_\ell)}1_B(L_1\cap\ldots\cap L_\ell)\,\nu_{q_\ell}(dL_\ell)\ldots\nu_{q_1}(dL_1).
\]
This gives rise to an invariant probability measure $\tilde \nu_q$ on $G(n,q)$.  However, by \cite[Thm.~13.2.11]{SchWeil2008} there is only one such measure, which implies that $\tilde \nu_q=\nu_q$.
This allows us in a first step to reduce the outer integral over $G(n,q_1),\ldots,G(n,q_m)$ in $I_{\ell,m}$ to a single integral over $G(n,q)$:
\begin{equation}\label{eq:IlmFirstReduction}
\begin{split}
I_{\ell,m} &= \int_{G(n,q)}\int_{A(n,p_1)}\cdots\int_{A(n,p_m)}f(E_1\cap\ldots\cap E_m\cap L)\\
&\qquad\qquad\qquad\qquad\times H(E_1\cap\ldots\cap E_m)\,\mu_{p_m}(dE_m)\ldots\mu_{p_1}(dE_1)\nu_q(dL).    
\end{split}
\end{equation}
To also convert the inner integrals over $A(n,p_1),\ldots,A(n,p_m)$ to a single integral, let $B$ be a Borel set in $A(n,n-q+\gamma)$ and define
\begin{align*}
    \widehat{\mu}_{n-q+\gamma}(B) = \int_{A(n,p_1)}\cdots\int_{A(n,p_m)}1_B(E_1\cap\ldots\cap E_m)\,\mu_{p_m}(dE_m)\ldots\mu_{p_1}(dE_1).
\end{align*}
The measure $\widehat{\mu}_{n-q+\gamma}$ is motion invariant on $A(n,n-q+\gamma)$.  However, by \cite[Thm.~13.1.3 and Thm.~13.2.12]{SchWeil2008} all such measures are constant multiples of the invariant measure 
$\mu_{n-q+\gamma}$, so there exists some constant $c>0$ such that $\widehat{\mu}_{n-q+\gamma}=c\mu_{n-q+\gamma}$. 
To determine the value of the constant $c$, we employ a special case of the Crofton formula \cite[Thm.~5.1.1]{SchWeil2008}. It says that
\begin{align*}
    \int_{A(n,k)}\mathcal{H}^i(E\cap W)\,\mu_k(dE) = {\omega_{n+1}\omega_{i+1}\over\omega_{k+1}\omega_{n-k+i+1}}\mathcal{H}^{n-k+i}(W)
\end{align*}
for $0\leq i\leq k\leq n-1$ and where $W\subset\R^n$ is a convex set of dimension $n-k+i$, that is, the affine hull of $W$ has dimension $n-k+i$.
 Applying Crofton's formula with $i=k=n-q+\gamma$ and $W=B^n$ we obtain 
\begin{align*} \int_{A(n,n-q+\gamma)}\mathcal{H}^{n-q+\gamma}(E\cap B^n)\,\widehat{\mu}_{n-q+\gamma}(dE) &= c\int_{A(n,n-q+\gamma)}\mathcal{H}^{n-q+\gamma}(E\cap B^n)\,\mu_{n-q+\gamma}(dE) \\&= c\mathcal{H}^n(B^n) = c\kappa_n.
\end{align*}
On the other hand, by applying Crofton's formula repeatedly to each of the integrals 
in its definition, the integral on the left is
\begin{align*}
   & \int_{A(n,p_1)}\cdots\int_{A(n,p_m)}\mathcal{H}^{n-q+\gamma}(E_1 \cap \ldots \cap E_m\cap B^n)\,\mu_{p_m}(dE_m)\ldots\mu_{p_1}(dE_1)\\
   &={\omega_{n+1}\omega_{n-q+\gamma+1}\over\omega_{p_m+1}\omega_{2n-p_m-q+\gamma+1}}\int_{A(n,p_1)}\cdots\int_{A(n,p_{m-1})}\mathcal{H}^{2n-p_m-q+\gamma}(E_1\cap\ldots\cap E_{m-1}\cap B^n)\\
   &\hspace{8cm}\mu_{p_{m-1}}(dE_{m-1})\ldots\mu_{p_1}(dE_1)\\
   &\vdots\\
   &={\omega_{n+1}\omega_{n-q+\gamma+1}\over\omega_{p_m+1}\omega_{2n-p_m-q+\gamma+1}}{\omega_{n+1}\omega_{2n-p_m-q+\gamma}\over\omega_{p_{m-1}}\omega_{3n-p_m-p_{m-1}-q+\gamma+1}}\cdots\\
   &\hspace{6cm}\cdots {\omega_{n+1}\omega_{mn-p_m-\ldots-p_2-q+\gamma}\over\omega_{p_1+1}\omega_{n-p_1+mn-p_m-\ldots-p_2-q+\gamma+1}}\,\mathcal{H}^n(B^n)\\
   &={\omega_{n+1}^m\omega_{n-q+\gamma+1}\over\omega_{p_1}\cdots\omega_{p_m}\omega_{n+1}}\,\kappa_n.
\end{align*}
Here, we simplified the telescopic product of the $\omega$-terms and used our assumption $p_1+\ldots+p_m-(m-1)n=n-1+\gamma$ in the last step. A comparison of these two expressions implies 
$$
\widehat{\mu}_{n-q+\gamma} = {\omega_{n+1}^m\omega_{n-q+\gamma+1}\over\omega_{p_1}\cdots\omega_{p_m}\omega_{n+1}}\,\mu_{n-q+\gamma}.
$$
As a consequence, we can reduce the inner integrals in \eqref{eq:IlmFirstReduction} to a single integral over $A(n,n-q+\gamma)$:
\begin{align*}
    I_{\ell,m} = {\omega_{n+1}^m\omega_{n-q+\gamma+1}\over\omega_{p_1}\cdots\omega_{p_m}\omega_{n+1}}\int_{G(n,q)}\int_{A(n,n-q+\gamma)}f(E\cap L)H(E)\,\mu_{n-q+\gamma}(dE)\nu_q(dL).
\end{align*}
The result of Corollary \ref{cor:MultipleIntersections} can now be concluded from Theorem \ref{ThmGeneral}.
\end{proof}

The proof that the results of Theorems \ref{ThmGeneral} and \ref{ThmGeneral1} are equivalent, is based on the following Blaschke--Petkantschin formula: 
\begin{equation}\label{eq:BP}
    \int_{A(n,k)} g(E)\mu_k(dE)=\frac{\omega_{n-k}}{\omega_{r-k}}\int_{G(n,r)}
    \int_{A(L,k)} g(E)d(o,E)^{n-r}\mu_k^{L}(dE)\nu_r(dL), 
\end{equation}
with integers $0\le k<r<n$ and measurable $g:A(n,k)\to [0,\infty)$, see e.g.~\cite[p.~33]{KiderlenSurvey}. 

\begin{proof}[Proof of the equivalence of Theorems \ref{ThmGeneral} and \ref{ThmGeneral1}]
That Theorem \ref{ThmGeneral1} implies Theorem \ref{ThmGeneral}
follows ra\-ther directly by invariant integration of  \eqref{eq:new} with respect to $L_0$ and an application of the Blaschke--Petkantschin formula \eqref{eq:BP} with $r=q$ and $k=\gamma$. 

Assume now that Theorem \ref{ThmGeneral} holds, and let $f:A(L_0,\gamma)\to[0,\infty)$ be measurable. Assume first that 
$f$ is rotation invariant under all rotations fixing $L_0$. The 
function $\tilde f(E)=f_I(d(o,E))$, $E\in A(n,\gamma)$, is the rotation invariant extension of $f$ to $A(n,\gamma)$. 
Hence, the invariance of $\mu_{n-q+\gamma}$ gives
\begin{align*}
    &\int_{G(n,q)}
		\int_{A(n,n-q+\gamma)}
		\tilde f(E \cap L) H(E) \ \mu_{n-q+\gamma}(dE) \ \nu_q(dL)
  \\
  &=\int_{SO(n)}
		\int_{A(n,n-q+\gamma)}
		\tilde f(E \cap \vartheta L_0) H(E) \ \mu_{n-q+\gamma}(dE) \ \nu(d\vartheta)
  \\
  &=\int_{SO(n)}
		\int_{A(n,n-q+\gamma)}
		\tilde f\big(\vartheta(E \cap L_0)\big) H(\vartheta^{-1}E) \ \mu_{n-q+\gamma}(dE) \ \nu(d\vartheta)
   \\
   &=	\int_{A(n,n-q+\gamma)}
		f(E \cap L_0)
        H(E) \ \mu_{n-q+\gamma}(dE). 
\end{align*}
On the other hand, the Blaschke--Petkantschin relation \eqref{eq:BP} and a similar reasoning shows 
\[
\int_{A(n, \gamma)}
\tilde f( E) d(o,E)^{-(n-q)} J_H(d(o,E)) \ \mu_\gamma(dE)=
\frac{\omega_{n-\gamma}}{\omega_{q-\gamma}}
\int_{A(L_0, \gamma)}
f(E) J_H(d(o,E)) \ \mu_\gamma^{L_0}(dE).  
\]
Theorem \ref{ThmGeneral} states that these two displayed expressions coincide up to multiplication with the constant $D(n,q,\gamma)$, and thus \eqref{eq:new} holds for the function $f$ chosen.  

With arguments as in the proof of Lemma \ref{Lemma:EnoughRotational}, one can show that \eqref{eq:new} holds for all measurable $f:A(L_0,\gamma)\to [0,\infty)$ if it holds for all such functions which are in addition invariant under all rotations fixing $L_0$. This proves that Theorem \ref{ThmGeneral1} holds and concludes the proof of the equivalence. 
\end{proof}

\subsection*{Acknowledgment}
ED has been supported by Sapere Aude: DFF-Starting Grant 0165-00061B. CT has been supported by the German Research Foundation (DFG) via SPP 2265 \textit{Random Geometric Systems} and the project \textit{Limit theorems for the volume of random projections of $\ell_p$-balls} (project number 516672205).

\bibliographystyle{plain}
\bibliography{Bib}

\end{document}